\frenchspacing \setlength{\textheight}{20cm}
\newtheorem{theorem}{Theorem}[section]
\newtheorem{lemma}[theorem]{Lemma}
\newtheorem{remark}[theorem]{Remark}
\newtheorem{proposition}[theorem]{Proposition}
\numberwithin{equation}{section}
\renewcommand{\epsilon}{\varepsilon}
\renewcommand{\theta}{\vartheta}
\renewcommand{\rho}{\varrho}
\renewcommand{\phi}{\varphi}
\newcommand{\numberset}{\mathbb}
\newcommand{\N}{\numberset{N}}
\newcommand{\R}{\numberset{R}}
\newcommand{\Z}{\numberset{Z}}
\newcommand{\LM}[1]{\hbox{\vrule width.2pt \vbox to#1pt{\vfill \hrule width#1pt height.2pt}}}
\newcommand{\LL}{{\mathchoice
{\,\LM7\,}{\,\LM7\,}{\,\LM5\,}{\,\LM{3.35}\,}}}
\lbrace\begin{array}{@{}l@{}}}%
\DeclareMathAlphabet{\mathpzc}{OT1}{pzc}{m}{it}
\newcommand{\cstar}{\stackrel{*}{\rightharpoonup}}
\newcommand{\h}{\mathcal{H}^{1}}
\newcommand{\mline}{b\otimes t\h \mrestr (t\R)}
\newcommand{\M}{\mathcal{M}_{\mathrm{df}}^{(m)}}
\newcommand{\Rc}{\mathcal{R}_{1}^{\mathrm{cl}}(\Omega,\Z^{m})}
\newcommand{\Sa}{\mathcal{S}}
\newcommand{\xo}{x_{0}}
\newcommand{\Tj}{\widehat{T}_{j}}
\newcommand{\Mb}{\mathbf{M}}
\newcommand{\su}{\mathrm{supp}}
\newcommand{\sbb}{\subset\subset}
\newcommand{\lt}{\Vert T\Vert}
\newcommand{\Fl}{\mathbf{F}}
\newcommand{\rz}{\mathcal{R}_{1}(\Omega,\Z^{m})}
\newcommand{\ltl}{\lvert\theta\lvert}
\newcommand{\bp}{\bar{\psi}}
\newcommand{\hy}[1]{{\operatorname{\mathit{#1}}}}
\newcommand{\mrestr}{\LL}
\begin{document}
\title[Homogenization of energies defined on $1$-rectifiable currents] {Homogenization of energies defined on $1$-rectifiable currents}

\author[A. Garroni]
{A. Garroni}
\address[Adriana Garroni]{Dipartimento di Matematica ``Guido Castelnuovo'', Sapienza Universit\`a di Roma, P.le Aldo Moro 5, I-00185 Roma, Italy}
\email[A. Garroni]{garroni@mat.uniroma1.it}

\author[P. Vermicelli]
{P. Vermicelli}
\address[Pietro Vermicelli]{}
\email[P. Vermicelli]{pietro@pietrovermicelli.com}

\begin{abstract}
In this paper we study the  homogenization of a class of energies concentrated on lines. In dimension $2$ (i.e., in codimension $1$) the problem reduces to the homogenization of partition energies studied by \cite{AB}. There, the key tool is the representation of partitions in terms of  $BV$ functions with values in a discrete set. In our general case the key ingredient is the representation of closed loops with discrete multiplicity either as divergence-free matrix-valued measures supported on curves or with $1$-currents with multiplicity in a lattice. In the $3$ dimensional case the main motivation for the analysis of this class of energies is the study of line defects in crystals, the so called dislocations.
\end{abstract}

\maketitle

\begin{center}
{\em Dedicated to Umberto Mosco in the occasion of his 80th birthday}
\end{center}

\section{Introduction}

In the present work we consider energies concentrated on lines of the form
\begin{equation}\label{funzomoge}
\int_{\Omega\cap\gamma}\psi(y,\theta(y),\tau(y))\,d\h(y)\,,
\end{equation} 
where $\gamma$ is a  $1$-rectifiable set in $\Omega\subset\R^n$ given by the union of closed loops, the function $\theta:\gamma\rightarrow\Z^{m}$ is a vector-valued multiplicity, constant on each closed loop of $\gamma$, and  $\tau:\gamma\rightarrow\Sa^{n-1}$ is the tangent vector defined $\h$-a.e.~on $\gamma$. 
The main result of the paper concerns the homogenization of energies of this class. This can be expressed as the characterization of the limit of scaled energies 
\begin{equation}\label{funzomogeps}
\int_{\Omega\cap\gamma}\psi\Bigl({x\over\epsilon},\theta(x),\tau(x)\Bigr)\,d\h(x)\,,
\end{equation} as $\epsilon\to 0$ when $\psi$ is periodic in the first variable.

The main motivation for the study of the energies above comes from the analysis of dislocations in crystals. Dislocations are defects in the crystalline structure of metals that are crucial for the understanding of plastic behaviours. 
At a continuum level they can be interpreted as line singularities carrying an energy of the form above, where the multiplicity $\theta$ is the so-called Burgers vector which belongs to a lattice (which is a material property), that we can assume to be $\Z^m$. The function $\psi$ represents the line tension energy density that can be computed \`a la Volterra (see e.g.  \cite{CGO}) using continuum elasticity.

The asymptotic behaviour of energies of the form \eqref{funzomogeps} can be expressed in terms the computation of their $\Gamma$-limit with respect to a suitable convergence. In a two-dimensional setting this has been carried over in \cite{AB} in a BV setting. Indeed, in that case the system of lines can be interpreted as the set of the interfaces of a Caccioppoli partition, or, equivalently, the set of discontinuity points of a BV-function taking values in a discrete set. In that functional setting energies of the form above can be analysed as $\Gamma$-limits with respect to the BV-convergence. 
In higher dimension, instead of identifying systems of loops with partitions, they can be interpreted as divergence-free measures or $1$-rectifiable currents without boundary. Throughout the paper we will make use of those two standpoints interchangeably, taking advantage of the possibility of choosing the most suited of the two in technical points. The corresponding equivalent notions of convergence makes it possible to study energies \eqref{funzomogeps} in terms of $\Gamma$-convergence.

The main result of the paper is that, under suitable growth assumptions of $\psi$ the $\Gamma$-limit of energies \eqref{funzomogeps} as $\epsilon\to0$ exists and can be written as
\begin{equation}\label{funzomoge}
\int_{\Omega\cap\gamma}\psi_{\mathrm{hom}}(\theta(y),\tau(y))\,d\h(y)\,,
\end{equation} 
for a suitable function $\psi_{\mathrm{hom}}$. Furthermore, this function can be characterized by an asymptotic formula.

In order to prove that the function given by the asymptotic homogenization formula gives a lower bound for the $\Gamma$-limit, following \cite{Bra}, we make use of Fonseca and M\"uller blow-up technique \cite{FM}. This method, originally introduced to deal with relaxation problems, works nicely with homogenization problems as well. Here, as in \cite{CGM}, it will be useful to rephrase the problem in terms of closed 1-rectifiable currents, as this allows an easy treatment of the possibility of fixing boundary conditions, which is a technical point necessary to carry over the blow-up method. In order to prove the upper bound we proceed by density using the homogenization formula  explicit construction of recovery sequences. We will need some results on divergence-free measures for which we refer to \cite{CGM} instead.

\section{Formulation of the problem}

Let $\Omega\subset\R^{n}$ be an open set with Lipschitz boundary. Let  $\psi:\R^{n}\times\Z^{m}\times\Sa^{n-1}\rightarrow[0,\infty)$ be the energy density of the energy \eqref{funzomoge}. We assume that $\psi$ ia a Borel function and satisfies
\begin{equation}\label{growth}
c_{0}\ltl\leq\psi(y,\theta,\tau)\leq c_{1}\lvert\theta\lvert \qquad c_{0},c_{1}>0,\,\,\forall y\in\mathbb{R}^{n},\,\,\forall\tau\in\mathcal{S}^{n-1}, \ \ \forall\vartheta\in\mathbb Z^n.
\end{equation}

A convenient framework to represent the set of admissible configurations is the one of divergence-free matrix-valued measures  or alternatively of 1-rectifiable currents without boundary.
\smallskip

{\em Representation with measures:}

Following \cite{CGM}, we will denote by $\M(\Omega)$ the set of divergence-free (in the sense of distribution) measures $\mu\in\mathcal{M}(\Omega;\R^{m\times n})$ of the form 
$$
\mu=\theta\otimes\tau\h\LL\gamma,
$$ 
where $\theta:\gamma\rightarrow\Z^{m}$ is a $\h\mrestr\gamma$-integrable function, $\gamma$ a 1-rectifiable set and $\tau:\gamma\rightarrow\Sa^{n-1}$ its tangent vector defined $\h$-a.e. on $\gamma$. 
The divergence-free conditions reads as
\begin{equation}\label{div-free}
\int_\gamma \theta\cdot(D\phi \tau) d\mathcal{H}^1=0,
\end{equation}
for all $\phi\in C_0^\infty(\Omega;\mathbb{R}^m)$.

So that, for any $\mu\in\M(\Omega)$ the energy in \eqref{funzomoge} is denoted by 
\begin{equation}\label{funzomoge1}
F(\mu)=\int_{\Omega\cap\gamma}\psi(y,\theta(y),\tau(y))\,d\h(y)\,.
\end{equation} 
In particular by the growth condition \eqref{growth}
we deduce that the energy 
$$
F(\mu)\geq c_0 |\mu|(\Omega), 
$$
and it is coercive with respect to the weak$^*$ convergence of measures. Indeed a sequence with bounded energy is in particular bounded in the total variation, and therefore it is compact in the weak$^*$ converge. 
The fact that $\M(\Omega)$ is closed with respect to the weak$^*$ convergence can be seen, as already mentioned, in a very efficient way by using the approach of geometric measure theory, i.e., the setting of rectifiable currents \`a la Federer and Fleming extended to the case of currents with vector multiplicity.

\smallskip
\smallskip

{\em Representation with integral $1$-rectifiable currents:}

We denote by $\mathcal{R}_{1}(\Omega,\Z^{m})$  the set of $\Z^{m}$-valued 1-rectifiable currents. Let $\theta$, $\gamma$, $\tau$ be as before, then $T\in\mathcal{R}_{1}(\Omega,\Z^{m})$ is a functional on the space of smooth compactly supported 1-forms that admits the following representation
\begin{equation}\label{def1current}
\langle T,\phi\rangle=\int_{\gamma}\theta\langle\phi;\tau\rangle d\h\in\R^{m},\quad\forall\phi\in C^{\infty}_{c}(\Omega,\R^{n}).
\end{equation} 
We recall that the boundary of a 1-rectifiable current $T$ is the 0-current $\langle\partial T,\phi\rangle=\langle T,d\phi\rangle$ for all $\phi\in C^{\infty}_{c}(\Omega)$; finally a current is \textit{closed} or \textit{without boundary} if $\partial T=0$. We denote by $\Rc$ the set of currents in $\mathcal{R}_{1}(\Omega,\Z^{m})$ which are closed.

Now there is a $1$-to-$1$ correspondence between measures in $\M(\Omega)$ and currents in $\mathcal{R}_{1}(\Omega,\Z^{m})$ with no boundary.  Indeed it is immediate to see that the divergence free condition \eqref{div-free} translates in the condition of having zero boundary for the corresponding currents.
Therefore for any $\mu\in\M(\Omega)$ we denote by $T(\mu)$ the corresponding current in $\Rc$ and for any $T\in \Rc$ we denote by $\mu(T)$ the corresponding measure in $\M(\Omega)$.

In particular given $\mu\in \M(\Omega)$ the mass of the associated current $T(\mu)$ is given by
$$
\Mb(T(\mu))=|\mu|(\Omega).
$$
Moreover the weak$^*$ convergence of a sequence of measure $\mu_j$ translates exactly in the weak$^*$ convergence of the corresponding currents $T_j$, i.e.,
$$
\lim_j\,\langle T_j,\phi\rangle = \langle T,\phi\rangle \quad \forall \phi\in C_{c}(\Omega,\R^n).
$$

The advantage of using the language of rectifiable currents relies on a rich theory which guarantees a structure result which allows to characterize all currents in $\Rc$ as a countable family of Lipschitz closed loops with constant multiplicity, a compactness result for sequence with bounded mass in $\Rc$ (see \cite{FF}), and a good  approximation of currents (and therefore of the corresponding measures) with polyhedral currents (i.e. currents in $\Rc$ supported on polyhedral curves). These results in the formulation that is needed here are recalled in the Appendix.

Our results  will be mainly stated using the more familiar notation of measures, but throughout the paper, especially in the proofs, we will use the two notations interchangeably, making sure to highlight the advantages of one over the other.

\section{The homogenization theorem}

We now state the main result of this paper which concerns the homogenization of the energies concentrated on lines.

In the following $\Omega\subset\mathbb{R}^{n}$ is a bounded, open set with Lipschitz boundary and $\psi: \mathbb{R}^{n}\times\mathbb{Z}^{m}\times\mathcal{S}^{n-1}\rightarrow [0,\infty)$ is a Borel function satisfying \eqref{growth}.

Additionally we assume that $\psi$ is $1$-periodic in the first variable and we define
\begin{equation}\label{funzomo3}
F_{\epsilon}(\mu)=\int_{\Omega\cap\gamma}\psi\bigg(\frac{y}{\epsilon},\theta(y),\tau(y)\bigg)d\h(y)\qquad \mu\in\M(\Omega).
\end{equation}

The main result of this paper is the characterization of the $\Gamma$-limit of the functional $F_\epsilon$ as $\epsilon\to 0$.

We stress that a $\Gamma$-convergence result must be complemented by a compactness result: together they guarantee the convergence of minima. Here compactness in the class $\M(\Omega)$, as we noted, is a consequence of the divergence free constraint. This is the first point in which the already mentioned equivalence between the representation with measures and the one with 1-currents gives the its contribution.

\begin{theorem}[Compactness] \label{compact-th}Let $\mu_\epsilon\in\M(\Omega)$ be a sequence of measures satisfying 
	\begin{displaymath}
	F_{\epsilon}(\mu_\epsilon)\leq C
	\end{displaymath}
	for some $C>0$, then there is a measure  $\mu \in\M(\Omega)$ and a subsequence $(\mu_{\epsilon_{j}})$ such that
	\begin{displaymath}
	\mu_{\epsilon_{j}}\cstar \mu.
	\end{displaymath}
\end{theorem}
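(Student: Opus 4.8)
The plan is to deduce from the growth condition \eqref{growth} a uniform bound on the total variations, and then to identify the weak$^*$ limit as an element of $\M(\Omega)$ by passing to the language of currents and invoking the Federer--Fleming compactness theorem. First I would use the lower bound in \eqref{growth}. Writing $\mu_\epsilon=\theta\otimes\tau\h\mrestr\gamma$ (with $\theta,\tau,\gamma$ depending on $\epsilon$) and recalling that $|\tau|=1$ since $\tau\in\Sa^{n-1}$, one has $|\mu_\epsilon|(\Omega)=\int_{\Omega\cap\gamma}\ltl\,d\h$, so that
\begin{equation*}
c_{0}\,|\mu_\epsilon|(\Omega)\leq\int_{\Omega\cap\gamma}\psi\Bigl(\tfrac{y}{\epsilon},\theta(y),\tau(y)\Bigr)d\h(y)=F_{\epsilon}(\mu_\epsilon)\leq C.
\end{equation*}
Hence $\sup_\epsilon|\mu_\epsilon|(\Omega)\leq C/c_{0}=:M$, and the sequence is bounded in total variation.

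Next I would translate the problem into currents. Setting $T_\epsilon:=T(\mu_\epsilon)\in\Rc$, the correspondence recalled above gives $\Mb(T_\epsilon)=|\mu_\epsilon|(\Omega)\leq M$ and $\partial T_\epsilon=0$, so the sequence $(T_\epsilon)$ has uniformly bounded mass and (trivially) uniformly bounded boundary mass. The Federer--Fleming compactness theorem for integral currents (recalled in the Appendix) then yields a subsequence $(T_{\epsilon_j})$ and an integral $1$-rectifiable current $T\in\rz$ with $T_{\epsilon_j}\to T$ weakly, equivalently in the flat norm $\Fl$. Since the boundary operator is continuous under weak convergence of currents and $\partial T_{\epsilon_j}=0$ for every $j$, the limit satisfies $\partial T=0$, hence $T\in\Rc$.

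Finally, the limit current $T\in\Rc$ corresponds to a measure $\mu:=\mu(T)\in\M(\Omega)$, and the equivalence between the two notions of convergence discussed above turns $T_{\epsilon_j}\to T$ into the desired $\mu_{\epsilon_j}\cstar\mu$ in $\mathcal{M}(\Omega;\R^{m\times n})$. I expect the main obstacle to lie precisely in this last identification of the limit. Weak$^*$ compactness of measures alone (Banach--Alaoglu applied to the total variation bound) would only produce a bounded measure, and while the divergence-free constraint \eqref{div-free} does pass to the weak$^*$ limit, neither rectifiability of $\gamma$ nor integrality of the multiplicity $\theta\in\Z^{m}$ is preserved under weak$^*$ convergence of general matrix-valued measures. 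It is exactly the detour through integral currents, together with the closure/compactness theorem of Federer and Fleming, that forces the limit to keep the structure $\theta\otimes\tau\h\mrestr\gamma$ with $\theta\in\Z^{m}$, and thus to belong to $\M(\Omega)$ rather than merely to the larger space of divergence-free measures.
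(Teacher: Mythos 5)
Your proposal is correct and follows essentially the same route as the paper: the lower bound in \eqref{growth} gives a uniform total variation bound, and the passage to the associated boundaryless currents together with the Federer--Fleming compactness theorem (Theorem \ref{compactness}) identifies the weak$^*$ limit as an element of $\M(\Omega)$. Your closing observation that plain Banach--Alaoglu would not preserve the rectifiable structure or the integrality of $\theta$ is exactly the point the paper makes in the remark following the theorem.
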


\begin{proof}
	The result is an immediate consequence of the lower bound for the energy density $\psi$ given by \eqref{growth}. Indeed $F_{\epsilon}(\mu_\epsilon)\leq C$ implies that 
	$$
	c_0 |\mu_\epsilon|(\Omega)= c_0 \int_{\Omega\cap\gamma_\epsilon}|\theta_\epsilon|(y)d\h(y)\leq C.
	$$
	
	Then, up to a subsequence the sequence of measure $\mu_\epsilon$ weakly$^*$ converge to some matrix valued Radon measure $\mu$. In order to conclude it is enough to show that the limit measure as the right structure, i.e., it is concentrated on a $1$-rectifiable set $\gamma$ and its density is of the form $\theta\otimes \tau$, in other words it belongs to $\M(\Omega)$.
	
	This is an immediate consequence of the result of compactness of $1$-rectifiable currents with integer multiplicity. Indeed the family of currents $T(\mu_\epsilon)$ satisfy
	$$
	\Mb(T(\mu_\epsilon))\le C\qquad \Mb(\partial T(\mu_\epsilon))=0
	$$
	and hence up to a subsequence it converges to a current in $T\in \Rc$. Then $\mu(T)\in \M(\Omega)$ and it is the weak$^*$ limit of the corresponding subsequence of $\mu_\epsilon$.
\end{proof}

\begin{remark}
This compactness result clarifies the importance of the divergence free condition for the measures $\mu$. It is indeed easy to construct a sequence of measures supported on $1$-rectifiable sets with integer multiplicities (equivalently a sequence of integer $1$-currents) which converges to a measure which is not supported on curves, for instance which converges to the $n$-dimensional Lebesgue measure. This can be done by taking a collection of many uniformly distributed short segments. In this example the corresponding current has large boundary and therefore the compactness result does not apply.

Moreover the above theorem sets the right topology in which we have to study the $\Gamma$-limit of the $F_\epsilon$.
\end{remark}

 Before stating the theorem, it is useful to introduce the following notation. For all $t\in\mathcal{S}^{n-1}$ choose a rotation $O_{t}\in\mathrm{SO}(n)$ with $O_{t}e_{1}=t$. Then, for every $h,l>0$ we define the rectangle 
 \begin{displaymath}
 R_{l,h}^{t}=O_{t}\bigg[\bigg[-\frac{l}{2},\frac{l}{2}\bigg]\times\bigg[-\frac{h}{2},\frac{h}{2}\bigg]^{n-1}\bigg]
 \end{displaymath}
 of height $h$ and a side $l$, centred at the origin, and one side parallel to the direction $t$. If the rectangle is centred in a point $x\in\R^n$ we denote it by $ R_{l,h}^{t}(x)$.
 Similarly we denote by $Q_{h}^{t}(x)$ the cube of side $h$, and one side parallel to the direction $t$, centred at $x\in\Omega$, i.e., $Q_{h}^{t}(x)=R_{h,h}^{t}(x)$. If $x=0$ we drop the $x$ and write $Q_{h}^{t}$.

\begin{theorem}[Homogenization]\label{thm1}
Assume that $\psi$, satisfying \eqref{growth}, is $1$-periodic in the first variable, then the functionals $F_{\varepsilon}$ in $($\ref{funzomo3}$)$ $\Gamma$-converge as $\epsilon\to 0$, with respect to the weak$^{\ast}$ convergence of measures, to the functional defined by
	\begin{displaymath}
	F_{\mathrm{hom}}(\mu)=\begin{cases}\displaystyle
	{\int_{\gamma\cap\Omega}\psi_{\mathrm{hom}}(\theta,\tau)d\mathcal{H}^{1}} & \mu\in\M(\Omega)\\ 
	+\infty & \hbox{otherwise},
	\end{cases}
	\end{displaymath}
	where
	for all $\textrm{b}\in\mathbb{Z}^{m}$ and $\textrm{t}\in\mathcal{S}^{n-1}$ the effective energy density is given by
	\begin{align}
	\label{cell-problem}
	\psi_{\mathrm{hom}}(b,t)=\lim_{T \to \infty}\frac{1}{T}\inf\Bigg\lbrace\int_{Q_{T}^{t}\cap\gamma}&\psi(y,\theta(y),\tau(y)) d \mathcal{H}^{1}(y):\mu\in\mathcal{M}_{\mathrm{df}}^{(m)}(Q_{T}^{t}),\\
	\nonumber
	& \mathrm{supp}(\mu - b \otimes t \mathcal{H}^{1}\mrestr(t \mathbb{R}\cap Q_{T}^{t}))\subset Q_{T}^{t}\Bigg\rbrace.
	\end{align}

\end{theorem}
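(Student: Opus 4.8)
The theorem equates the $\G$-limit with an explicit functional, so I would split the argument into the two $\G$-convergence inequalities, preceded by the verification that the cell formula \eqref{cell-problem} is well posed. Writing $g_{T}(b,t)$ for the infimum in \eqref{cell-problem}, the first task is to show that $T\mapsto g_{T}(b,t)$ is almost subadditive. Given near-optimal competitors on $Q_{S}^{t}$ and $Q_{T}^{t}$, one tiles a larger cube $Q_{NS}^{t}$ by translated copies and, using the $1$-periodicity of $\psi$ in the first variable to transport these competitors, glues them along the shared faces. Since each competitor coincides with the straight segment $b\otimes t\,\h\mrestr(t\R)$ in a neighbourhood of $\partial Q_{S}^{t}$, the pieces match with no boundary created, so the glued object is again admissible in $\M$ and closed; the only extra cost lives in thin boundary layers and is of lower order by the growth bound \eqref{growth}. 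A Fekete-type argument then gives existence of $\lim_{T}g_{T}/T=\psi_{\mathrm{hom}}(b,t)$, and the same estimates yield $c_{0}|b|\le\psi_{\mathrm{hom}}(b,t)\le c_{1}|b|$. The compatibility of integer translations with the rotated cube $Q_{S}^{t}$ for irrational directions $t$ is the delicate point here; I would absorb the mismatch into the boundary-layer error.

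For the $\G$-liminf inequality I would use the Fonseca--M\"uller blow-up method \cite{FM}, following \cite{Bra}. Given $\mu_{\varepsilon}\cstar\mu$ with $\liminf_{\varepsilon}F_{\varepsilon}(\mu_{\varepsilon})<\infty$, Theorem~\ref{compact-th} gives $\mu=\vartheta\otimes\tau\,\h\mrestr\gamma\in\M(\Omega)$. Consider the energy measures $\lambda_{\varepsilon}=\psi(\cdot/\varepsilon,\vartheta_{\varepsilon},\tau_{\varepsilon})\,\h\mrestr\gamma_{\varepsilon}$; these have uniformly bounded mass, so up to a subsequence $\lambda_{\varepsilon}\cstar\lambda$ for some nonnegative $\lambda$, and by weak$^{*}$ lower semicontinuity $\liminf_{\varepsilon}F_{\varepsilon}(\mu_{\varepsilon})\ge\lambda(\Omega)$. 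It then suffices to prove $\frac{d\lambda}{d(\h\mrestr\gamma)}(\xo)\ge\psi_{\mathrm{hom}}(\vartheta(\xo),\tau(\xo))$ for $\h\mrestr\gamma$-a.e.\ $\xo$. I restrict to the full-measure set of points $\xo$ at which the Besicovitch derivative exists, $\gamma$ has approximate tangent $t=\tau(\xo)$ with density one, $\vartheta(\xo)=b$, and the blow-ups of $\mu$ converge to $b\otimes t\,\h\mrestr(t\R)$. Choosing radii $\varrho$ with $\lambda(\partial Q_{\varrho}^{t}(\xo))=0$ and rescaling $y=\xo+\varepsilon x$, the localized energy $\lambda_{\varepsilon}(Q_{\varrho}^{t}(\xo))$ becomes $\varepsilon$ times an energy of the unscaled form on the large cube $Q_{\varrho/\varepsilon}^{t}$ (after a bounded translation reabsorbed by periodicity). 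Since $\mu_{\varepsilon}\to\mu$, the rescaled measure is, up to the boundary correction discussed below, an admissible competitor for $g_{\varrho/\varepsilon}(b,t)$, whence $\lambda_{\varepsilon}(Q_{\varrho}^{t}(\xo))\gtrsim\varepsilon\,g_{\varrho/\varepsilon}(b,t)\approx\varrho\,\psi_{\mathrm{hom}}(b,t)$; dividing by $\varrho\approx\h(\gamma\cap Q_{\varrho}^{t}(\xo))$ and letting $\varrho\to0$ gives the claim.

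The step I expect to be the main obstacle is this boundary correction: making the rescaled $\mu_{\varepsilon}$ genuinely admissible, i.e.\ forcing it to agree with the straight segment near $\partial Q_{\varrho/\varepsilon}^{t}$. Here the rectifiable-current viewpoint is essential: working with $T(\mu_{\varepsilon})\in\Rc$, one connects the current to the straight line inside a thin boundary annulus by adding a correction current, whose mass (hence energy, by \eqref{growth}) is controlled by an averaging/slicing argument over $\varrho$ and is negligible compared with $\varrho/\varepsilon$. Verifying that the corrected current remains closed while keeping this gluing cost small is the delicate technical core, and is precisely where the currents formalism of \cite{CGM} pays off.

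For the $\G$-limsup inequality I would argue by density. By the polyhedral approximation of currents in $\Rc$ recalled in the Appendix, together with \eqref{growth} and the lower semicontinuity of the $\G$-limsup, it suffices to build recovery sequences for $\mu$ supported on a finite union of segments with constant integer multiplicity. For a single segment of direction $t$ and multiplicity $b$ I would take an almost-optimal competitor for $g_{T}(b,t)$, rescale it by $\varepsilon$, and repeat it periodically along the segment, tiling the latter by cubes of side $\varepsilon T$. The boundary conditions in \eqref{cell-problem} guarantee that adjacent rescaled cells match across their shared faces with consistent orientation and multiplicity, so the resulting measure lies in $\M(\Omega)$ (the current is closed), and at the vertices of the polyhedral loop the incident segments join consistently thanks to the conservation of multiplicity enforced by $\partial T=0$. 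As $\varepsilon\to0$ with $T=T(\varepsilon)\to\infty$ slowly, the oscillating configuration converges weakly$^{*}$ to $\mu$, because each cell averages to the straight segment, and its energy converges to $\int_{\gamma}\psi_{\mathrm{hom}}(\vartheta,\tau)\,d\h=F_{\mathrm{hom}}(\mu)$. A diagonal argument upgrades this to arbitrary $\mu\in\M(\Omega)$, and the two matching inequalities prove the stated $\G$-convergence.
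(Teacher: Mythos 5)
Your proposal follows essentially the same route as the paper: the cell formula is shown to be well posed by tiling with integer-translated near-optimal competitors and absorbing the periodicity mismatch into a lower-order boundary layer (Proposition~\ref{prop1}), the liminf inequality is obtained by the Fonseca--M\"uller blow-up with the boundary matching handled through slicing of the associated currents and a small correction current on a good slice (Lemma~\ref{lem2} and Proposition~\ref{prop-flat-limit}), and the limsup inequality is proved by polyhedral density plus periodic repetition of rescaled cell competitors. You have correctly identified the boundary-correction step as the technical core, and your treatment of it coincides with the paper's.
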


\begin{proof}
	The proof of the lower bound is given in Subsection \ref{lb} and uses the characterization of the effective energy density by means of the asymptotic formula (studied in Section \ref{cell}). The proof of the upper bound is instead given in Section \ref{upper}.

\end{proof}

\subsection{The cell problem formula}\label{cell}

A key ingredient is the analysis of the cell problem formula in \eqref{cell-problem} which characterizes the effective energy. 

Here and in the rest of the paper it is convenient to introduce the localised functionals for every Borel subset $A\subseteq\R^{n}$ and a measure $\mu=\theta\otimes\tau \h\LL\gamma\in\M(A)$,
\begin{equation}\label{local}
F_{\epsilon}(\mu,A)=\int_{A\cap\gamma}\psi\left(\frac{y}{\epsilon},\theta,\tau\right)d\h.
\end{equation}
In the next proposition we prove that the energy density $\psi_{\mathrm{hom}}$ is well defined through an asymptotic formula. Moreover we show that it is rather flexible and thanks to the periodicity of $\psi$ is not sensitive to the translations. As a consequence we will get the continuity of $\psi_{\mathrm{\hom}}$.

\begin{proposition}[Homogenization formula]\label{prop1}
	Let $(x_{T})$ be a family of points in $\mathbb{R}^{n}$, with $T\in\R$ and $\psi$ as in Theorem \ref{thm1}. Then for all $b\in \mathbb{Z}^{m}$ and $t\in\mathcal{S}^{n-1}$ the limit 
	\begin{align*}
	\lim_{T \to \infty}\frac{1}{T}\inf\Bigg\lbrace\int_{Q_{T}^{t}(x_{T})\cap\gamma}&\psi(y,\theta,\tau) d \mathcal{H}^{1}:\mu=\theta\otimes\tau \h\LL\gamma\in \mathcal{M}_{\mathrm{df}}^{(m)}(Q_{T}^{t}(x_{T}))\\& \mathrm{supp}(\mu - b \otimes t \mathcal{H}^{1}\mrestr(x_T+t \mathbb{R}))\subset Q_{T}^{t}(x_{T})\Bigg\rbrace
	\end{align*}
	exists and it is independent of $(x_{T})$. Therefore it coincides with $\psi_{\hom}(b,t)$.
\end{proposition}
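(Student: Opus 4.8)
The plan is to prove the statement in three movements: a periodicity reduction, a uniform estimate quantifying the (in)dependence on the center, and an almost-subadditive tiling in the direction $t$ that produces the limit. Throughout I write $m(Q_T^t(x))$ for the infimum in the statement, relative to the cube $Q_T^t(x)$ with boundary datum $b\otimes t\,\h\mrestr(x+t\R)$, and I set $g(T):=m(Q_T^t(0))$, so that $\lim_T g(T)/T=\psi_{\hom}(b,t)$ is precisely what makes \eqref{cell-problem} meaningful. I will work in the language of currents, where admissibility of a competitor reduces to the algebraic conditions $\partial T=0$ and $\Z^m$-valued multiplicity; this is what lets competitors be glued by simple addition whenever their supports match along $t\R$, keeping closedness automatic. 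The first, elementary, observation is that the $\Z^n$-periodicity of $\psi$ gives, via the change of variables $y\mapsto y+k$ applied to every competitor, the exact identity $m(Q_T^t(x+k))=m(Q_T^t(x))$ for all $k\in\Z^n$. Hence every family $(x_T)$ may be reduced to $x_T\in[0,1)^n$, i.e. to \emph{bounded} centers.

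The crux --- and the main obstacle --- is a center estimate: there is a constant $C=C(b,c_1,n)$, independent of $T$, with $|m(Q_T^t(x))-g(T)|\le C$ for all $x\in[0,1)^n$. The difficulty is that $\psi$ is only periodic, not translation invariant, so one cannot simply translate a competitor for one cube onto the other. Instead I argue by an explicit surgery, which I may carry out after reducing (by the polyhedral approximation recalled in the Appendix, and a small inward retraction) to competitors that coincide with the straight line in a neighbourhood of the boundary. Starting from a near-optimal competitor for $Q_T^t(0)$, I keep it on a concentric cube $Q_{T-c}^t(0)\subset Q_T^t(x)$ with $c=O(1)$, and on the boundary shell I reconnect its exiting line to the displaced axis $x+t\R$ by a polyhedral \emph{connector} of length $O(|x|)$ carrying multiplicity $b$. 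Because this only bends an already existing line, the new current is still closed and carries the correct datum for $Q_T^t(x)$; by the upper growth bound in \eqref{growth} the added energy is at most $c_1|b|\,O(|x|)=O(1)$. The reverse construction is symmetric, which yields the stated bound with $C$ uniform in $T$.

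To obtain existence of $\lim_T g(T)/T$ I tile in the direction $t$. Fix $S$ and, for each $k$, a near-optimal competitor for the cube $Q_S^t(kS\,t)$, whose line datum is again carried by $t\R$; since consecutive competitors agree with $b\otimes t\,\h\mrestr(t\R)$ across the shared faces orthogonal to $t$ and vanish near the lateral faces, their sum is admissible for the central rectangle $R_{T,S}^t(0)$, and I extend it by $0$ on the rest of $Q_T^t(0)$ (which is consistent since the straight line does not meet the lateral faces). Writing $T=NS+r$ with $0\le r<S$ and filling the remaining slab of length $r$ by the straight line, this gives $g(T)\le\sum_{k=1}^{N}m(Q_S^t(kS\,t))+c_1|b|\,S$. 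Using the center estimate to replace each term by $g(S)+C$ and dividing by $T$, I get $g(T)/T\le (g(S)+C)/S+c_1|b|\,S/T$. Letting $T\to\infty$ and then $S\to\infty$ makes the $C/S$ and $S/T$ terms vanish and forces $\limsup_T g(T)/T\le\liminf_S g(S)/S$, so the limit exists.

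Finally, for an arbitrary family $(x_T)$, periodicity reduces it to $[0,1)^n$ and the center estimate gives $|m(Q_T^t(x_T))-g(T)|\le C$ for every $T$; dividing by $T$ shows that $\tfrac1T m(Q_T^t(x_T))$ has the same limit as $g(T)/T$, which exists by the previous step. This common value is by definition $\psi_{\hom}(b,t)$, as claimed. The only genuinely delicate points are the admissibility (closedness and integrality) of the concatenated and surgically modified currents and the uniformity in $T$ of the connector cost; both are exactly where passing to the rectifiable-currents viewpoint, rather than arguing with the measures directly, pays off.
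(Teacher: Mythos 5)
Your overall architecture (reduce centers modulo $\Z^n$ by periodicity, compare infima at different centers, then tile along the direction $t$ to get almost-subadditivity) is reasonable and close in spirit to the paper's proof, which also tiles a large cube with integer translates of a near-optimal competitor joined by short connectors. The gap is in your center estimate $|m(Q_T^t(x))-g(T)|\le C$ at fixed $T$. To build a competitor for $Q_T^t(x)$ out of one for $Q_T^t(0)$ you restrict the latter to the concentric smaller cube $Q_{T-c}^t(0)$ and then ``reconnect its exiting line''. But a near-optimal competitor is only required to coincide with $b\otimes t\,\h\mrestr(t\R)$ \emph{outside} $Q_T^t(0)$; inside, near $\partial Q_T^t(0)$, it can be arbitrary, so its restriction to $Q_{T-c}^t(0)$ creates boundary at every crossing of $\partial Q_{T-c}^t(0)$, not just at one exiting line. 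Even after choosing a good slice $\delta\in(0,c)$ via the slicing inequality, the weighted number of crossings is only bounded by $O(\Mb/c)=O(T)$, and there is no control on the flat norm of this $0$-dimensional boundary for a single fixed competitor --- that control is precisely what Lemma \ref{lem2} extracts from the weak$^*$ convergence $\mu_j\cstar b\otimes t\,\h\mrestr(t\R)$, an information you do not have here. Polyhedral approximation does not help: it preserves the support condition but does not make the competitor straight in a neighbourhood of the boundary. So the $O(1)$ surgery cost is unjustified, and with it both the center estimate and the step of your tiling where you replace $m(Q_S^t(kS\,t))$ by $g(S)+C$.

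The repair is the one the paper implements: never cut a competitor strictly inside the cube on which its boundary condition is imposed. Restricted to exactly $Q_T^t(x_T)$, a competitor has boundary only at the two points where $x_T+t\R$ meets the faces orthogonal to $t$; translate it by the \emph{integer} vectors $y_j-x_T$, with $y_j\in x_T+\Z^n$ within distance $\sqrt n$ of equispaced points $z_j$ on the target axis (so periodicity of $\psi$ preserves the energy exactly), and reconnect only those two points per tile to the target axis by segments of length $O(\sqrt n)$ carrying multiplicity $b$. This costs $O(1)$ per tile and yields the comparison between arbitrary families of centers and the subadditivity in a single construction, at the harmless price of comparing cubes of slightly different side lengths ($T$ versus $T+L$, or $S$) rather than insisting on the same $T$ on both sides of your estimate.
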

\begin{proof}
	Let $\bar{\gamma_{t}}(x_T)$ be the straight line parallel to $t$ passing through $x_T$, i.e., $\bar{\gamma_{t}}(x_{T})=x_T+t\mathbb{R}$ and denote $\bar{\gamma_{t}}=\bar{\gamma_{t}}(0)$. 
	Note that, in general, the line $\bar{\gamma_{t}}(x_{T})$ may not intersect the set of points in $\mathbb{Z}^{n}$. 
	
	Let $S\gg T$. Fix $L>n$ and consider the family of equispaced points $(z_{j})$ on $\bar{\gamma_{t}}(x_{S})$ with spacing $T+L$. For each point $z_{j}$ consider the point $y_{j}\in x_{T}+\mathbb{Z}^{n}$ such that 
	\begin{equation}\label{equispazio}
	\lvert y_{j} - z_{j}\lvert \leq \sqrt{n}.
	\end{equation}
	Let $\mu^{(T)}=\theta^{(T)}\otimes\tau^{(T)}\h\mrestr\gamma^{(T)}$ be a test measure for the minimum problem
	\begin{equation}\label{mt}
	m^{(T)}=\inf\Bigg\lbrace F_1(\mu,Q_{T}^{t}(x_{T})): \mu\in \M(\R^n)\ ,\, \ \mathrm{supp}(\mu - b \otimes t \mathcal{H}^{1}\mrestr\bar{\gamma_{t}}(x_{T}))\subset Q_{T}^{t}(x_{T})\Bigg\rbrace
	\end{equation} 
	such that
	\begin{equation}\label{equaminmu}
	F_1(\mu^{(T)},Q_{T}^{t}(x_{T}))=\int_{Q_{T}^{t}(x_{T})\cap\gamma^{(T)}}\psi(y,\theta^{(T)},\tau^{(T)})d\h<m^{(T)}+\frac{1}{T}.
	\end{equation}

	\begin{figure}\label{fig1}
		\centering
		\includegraphics[scale=0.7]{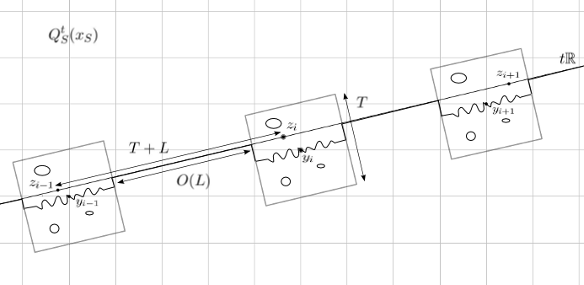}
		\caption{}
		\label{fig:pica22}
	\end{figure}

	Let $J$ be the set of indices such that $Q_{T}^{t}(y_{j})\subset Q_{S}^{t}(x_{S})$ and 
	\begin{displaymath}
	R_{S,T}=Q_{S}^{t}(x_{S})\setminus\bigcup_{j\in J}( Q_{T}^{t}(y_{j})).
	\end{displaymath}
	Hence define the function $h^j(x)= x-x_T+y_j$ and the measure
	\begin{displaymath}
	\widetilde{\mu}^{(S)}=\sum_{j\in J}h^j_\sharp(\mu^{(T)})\mrestr Q_{T}^{t}(y_{j})+b\otimes t\mathcal{H}^{1}\mrestr(\bar{\gamma_{t}}(x_{S})\cap R_{S,T}).
	\end{displaymath}  
Note that the measure $h_\sharp^j\mu^{(T)}$ is nothing else than the  measure obtained from $\mu^{(T)}$  translating the support from the cube $Q^t_T(x_T)$ to the cube $Q^t_T(y_j)$, i.e.,
$$
h_\sharp^j\mu^{(T)}= \theta^{(T)}(\cdot-x_T+y_j)\otimes\tau^{(T)}(\cdot-x_T+y_j)\h\mrestr(\gamma^{(T)}-x_T+y_j),
$$
therefore by the choice of $y_j$ we have
$$
	F_1(h_\sharp^j\mu^{(T)},Q_{T}^{t}(y_j))<m^{(T)}+\frac{1}{T}.
$$
	From $\widetilde{\mu}^{(S)}$, we can obtain a divergence free measure $\widehat{\mu}^{(S)}=\widehat{\theta}^{(S)}\otimes\widehat{\tau}^{(S)}\h\mrestr\widehat{\gamma}^{(S)}$ in $Q_{S}^{t}(x_{S})$, by connecting through a segment each endpoint of $\gamma^{(T)}+y_{i}-x_{T}$ on $y_{i}+Q_{T}^{t}$ to $\bar{\gamma}_{t}(x_{S})$. In doing so we have obtained a test measure for $m^{(S)}$ (see Figure \ref{fig1}). The conclusion follows if we prove 
	\begin{displaymath}
	\limsup_{S \to \infty}\frac{1}{S}m^{(S)}\leq\liminf_{T \to \infty}\frac{1}{T}m^{(T)}.
	\end{displaymath}
	In fact, using $\widehat{\mu}^{(S)}$ we get
	\begin{align*}
	\frac{1}{S}& m^{(S)}\leq\frac{1}{S}\int_{Q_{S}^{t}(x_{S})\cap\widehat{\gamma}^{(S)}}\psi(y,\widehat{\theta}^{(S)},\widehat{\tau}^{(S)})d\mathcal{H}^{1}\\[6pt] &=\frac{1}{S}\sum_{j\in J}F_1(h_\sharp^j\mu^{(T)},Q_{T}^{t}(y_j))+ \frac{1}{S}\int_{R_{S,T}\cap \bar{\gamma_{t}}(x_{S})}\psi(y,b,t)d\mathcal{H}^{1}\\[6pt]&
	\leq\frac{1}{S}\bigg[\frac{S}{T+L}\bigg]\bigg(m^{(T)}+\frac{1}{T}\bigg)+\frac{1}{S}\mathcal{H}^{1}(R_{S,T}\cap \bar{\gamma_{t}}(x_{S}))c+2C\sqrt{n}\frac{1}{S}\bigg[\frac{S}{T+L}\bigg]\\[6pt] &\leq \frac{1}{S}\bigg[\frac{S}{T+L}\bigg]\bigg(m^{(T)}+\frac{1}{T}\bigg)+\frac{1}{S}\bigg[S-T\bigg(\bigg\lvert\bigg[\frac{S}{T+L}\bigg]-2\bigg\lvert\bigg)\bigg]c\\[6pt]&+2C\sqrt{n}\frac{1}{S}\bigg[\frac{S}{T+L}\bigg]
	\end{align*}
	where we have used the choice of $\mu^{(T)}$ as in (\ref{equaminmu}) and  (\ref{equispazio}) to control the contribution of the segments in the measure $\widehat{\mu}^{(S)}$. The conclusion follows taking the $\limsup_{S \to\infty}$ first and then the $\liminf_{T \to \infty}$.
\end{proof}

\begin{remark}\label{uniform-cell}
Once the existence of the limit is proved it is also easy to see that this is not only independent of the choice of $x_T$ but also uniform in $x_T$. Indeed, by periodicity, we can assume that $x_T\in Q_1(0)$ which is compact, and then we deduce the uniformity.
\end{remark}
An important consequence of the above characterization of $\psi_{\mathrm{hom}}$ is its continuity.
\begin{proposition}[Continuity of $\psi_{\mathrm{hom}}$]\label{continuitapsihom} For all $b\in\Z^{m}$ and $t,t'\in\Sa^{n-1}$ let $\psi_{\hom}$ be as in Theorem \ref{thm1}. Then the following continuity property holds
	\begin{displaymath}
	\lvert \psi_{\mathrm{hom}}(b,t)-\psi_{\mathrm{hom}}(b,t')\lvert\leq c\lvert b\lvert\lvert t-t'\lvert,
	\end{displaymath}
	with $c>0$ a constant that depends only on $\psi_{\mathrm{hom}}$.
\end{proposition}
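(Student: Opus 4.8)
The plan is to exploit the asymptotic formula of Proposition \ref{prop1}, together with its translation invariance (Remark \ref{uniform-cell}), and to compare the two cell problems defining $\psi_{\hom}(b,t)$ and $\psi_{\hom}(b,t')$ by an explicit surgery on near-optimal configurations. Since the estimate is symmetric in $t$ and $t'$, it suffices to prove $\psi_{\hom}(b,t')-\psi_{\hom}(b,t)\le c\lvert b\lvert\,\lvert t-t'\lvert$, and I may assume $\lvert t-t'\lvert$ small. Write $m^{(T)}(t)$ for the infimum in \eqref{mt} with $x_T=0$, so that by Proposition \ref{prop1} $\frac1T m^{(T)}(t)\to\psi_{\hom}(b,t)$ and likewise for $t'$. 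Fix a near-optimal competitor $\mu$ for $m^{(T)}(t)$, i.e.\ a measure in $\M(\R^n)$ equal to $b\otimes t\h\mrestr(t\R)$ outside $Q_T^t$ and with $F_1(\mu,Q_T^t)<m^{(T)}(t)+\frac1T$.

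From $\mu$ I would build a competitor $\mu'$ for the problem in direction $t'$ on a slightly larger cube $Q_S^{t'}$, with $S=(1+K\lvert t-t'\lvert)T$ and $K$ a dimensional constant to be fixed. Keep $\mu$ unchanged inside $Q_T^t$; the line $t\R$ crosses $\partial Q_T^t$ exactly at the two face-centres $\pm\frac{T}{2}t$, where $\mu$ carries flux $b$. Outside $Q_T^t$ I discard the two straight $t$-rays of $\mu$ and replace them: I glue, at each exit point $\pm\frac{T}{2}t$, a straight connecting segment of multiplicity $b$ joining it to the point $\pm\frac{T}{2}t'$ on the line $t'\R$, and I then continue straight along $t'\R$ out to $\partial Q_S^{t'}$. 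Rerouting the flux $b$ in this way keeps the resulting measure divergence free and matching $b\otimes t'\h\mrestr(t'\R)$ outside $Q_S^{t'}$, so $\mu'$ is admissible for $m^{(S)}(t')$. The connecting segments have length $\frac T2\lvert t-t'\lvert$ each, and using the Lipschitz dependence $\lVert O_t-O_{t'}\rVert\lesssim\lvert t-t'\lvert$ one checks that, for $K$ large enough depending only on $n$, both $Q_T^t$ and the two segments are contained in $Q_S^{t'}$, while the extra straight $t'$-portions from $\pm\frac T2 t'$ to $\partial Q_S^{t'}$ have total length $S-T=K\lvert t-t'\lvert T$.

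The energy estimate then only uses the upper growth bound in \eqref{growth}: the defect part contributes $F_1(\mu,Q_T^t)<m^{(T)}(t)+\frac1T$, while the connectors and the extra $t'$-portions carry multiplicity $b$ and total length $O(\lvert t-t'\lvert T)$, hence cost at most $c_1\lvert b\lvert(1+K)\lvert t-t'\lvert T$. This yields
\begin{displaymath}
m^{(S)}(t')\le m^{(T)}(t)+\tfrac1T+c_1(1+K)\lvert b\lvert\,\lvert t-t'\lvert\,T.
\end{displaymath}
Dividing by $S=(1+K\lvert t-t'\lvert)T$ and letting $T\to\infty$ — so that $S\to\infty$ and, by Proposition \ref{prop1}, $\frac1S m^{(S)}(t')\to\psi_{\hom}(b,t')$ and $\frac1T m^{(T)}(t)\to\psi_{\hom}(b,t)$ — and then bounding $\frac1{1+K\lvert t-t'\lvert}\le1$, gives $\psi_{\hom}(b,t')-\psi_{\hom}(b,t)\le c_1(1+K)\lvert b\lvert\,\lvert t-t'\lvert$. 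Swapping $t$ and $t'$ gives the claim with $c=c_1(1+K)$. The main obstacle is the geometric bookkeeping of the surgery: one must verify that the re-routed configuration is genuinely admissible (divergence free with the correct far-field line in direction $t'$) and that the extra curve length needed to pass from the $t$-line to the $t'$-line is truly of order $\lvert t-t'\lvert T$ and fits inside a cube only $O(\lvert t-t'\lvert T)$ larger; everything else is the soft growth estimate and the passage to the limit.
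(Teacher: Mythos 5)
Your proposal is correct and follows essentially the same route as the paper: a near-optimal competitor for the cell problem in direction $t$ is kept inside $Q_T^t$ and turned into a competitor for direction $t'$ on a cube enlarged by $O(\lvert t-t'\lvert T)$ by adding connecting segments of multiplicity $b$ and total length $O(\lvert t-t'\lvert T)$, after which the upper bound in \eqref{growth} and the asymptotic formula of Proposition \ref{prop1} give the one-sided estimate, and symmetry finishes the argument. This is precisely the construction depicted in Figure \ref{figura45} of the paper.
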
 
\begin{proof}
	We prove this property by means of the asymptotic formula. Suppose that $\psi_{\mathrm{hom}}(b,t')\geq\psi_{\mathrm{hom}}(b,t)$. Fix $\epsilon>0$ and let $\mu_{t}^{(T)}$ be a test measure for the minimum problem $m_{t}^{(T)}$ in  (\ref{mt}) (Proposition 1.12, with $x_{T}=0$), such that
	\begin{equation}\label{formulaminimoinzero}
	\int_{Q_{T}^{t}\cap\gamma_{t}^{(T)}}\psi(y,\theta_{t}^{(T)},\tau_{t}^{(T)})d\h< m_{t}^{(T)}+\epsilon
	\end{equation}
	From $\mu_{t}^{(T)}$, we can obtain a test measure $\widehat{\mu}_{t'}^{(T)}$ for the problem 
	\begin{equation*}
	\inf\bigg\{ \int_{Q_{T+T\lvert t-t'\lvert}^{t'}\cap\gamma}\psi(y,\theta,\tau)d\h:\mu\in\M(\R^n),\ \mathrm{supp}(\mu-b\otimes t'\h\mrestr(t'\R))\subset Q_{T+T\lvert t-t'\lvert}^{t'}\bigg\}
	\end{equation*}
	proceeding as in Figure \ref{figura45}  (with support on the bold line), adding two segments to obtain the divergence free condition in $Q_{T+T\lvert t-t'\lvert}^{t'}\supset Q_{T}^{t}$ for the measure $\widehat{\mu}_{t'}^{(T)}$ and to get the right boundary condition.
	\begin{figure}[t] 
		\centering 
		\def\svgwidth{8cm}
		\scalebox{0.8}{\includegraphics{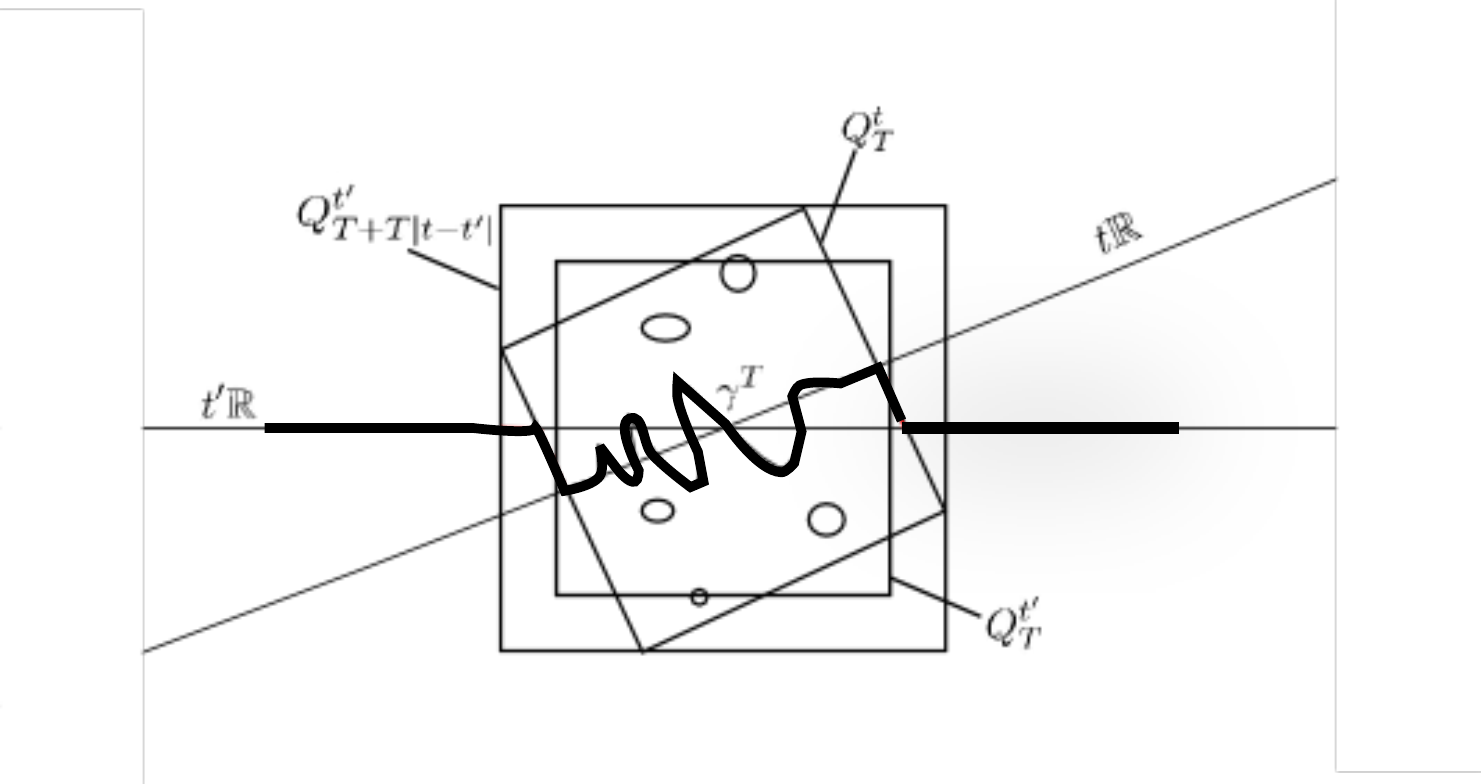}}
		\caption{The construction of the measure $\widehat{\mu}_{t'}^{(T)}$, for simplicity $t'=e_{1}$.}
		\label{figura45} 
	\end{figure}
	Then it is easy to see that 
	\begin{align*}
	\psi_{\mathrm{hom}}(b,t')-\frac{1}{T}m_{t}^{(T)}\leq\frac{1}{T}\bigg(&\int_{Q_{T+T\lvert t-t'\lvert}^{t'}\cap \widehat{\gamma}_{t'}^{(T)}}\psi(y,\widehat{\theta}_{t'}^{(T)},\widehat{\tau}_{t'}^{(T)})d\h \\& -\int_{Q_{T}^{t}\cap\gamma_{t}^{(T)}}\psi(y,\theta_{t}^{(T)},\tau_{t}^{(T)})d\h\bigg)+\frac{\epsilon}{T}
	\end{align*}
	and using the estimate from above for $\psi$ and by construction of $\widehat{\mu}_{t}^{T}$ we have
	\begin{displaymath}
	\psi_{\mathrm{hom}}(b,t')-\frac{1}{T}m_{t}^{(T)}\leq\frac{1}{T}c\lvert b\lvert T\lvert t-t'\lvert+\frac{\epsilon}{T}.
	\end{displaymath}
	We conclude taking the limit $T\to\infty$ and by the arbitrariness of $\epsilon$. To obtain the inequality with the opposite sign is enough to swap the roles of $t$ and $t'$.
\end{proof}

Finally we show that the asymptotic formula still holds if we replace the boundary condition with an approximate boundary condition. This point is essential in the proof of the lower bound.

To this aim we need the following technical lemma which allows one to modify the boundary value of a converging sequence of measures with a small change in the corresponding line energy.

\begin{lemma}\label{lem2}
	Let $\mu_{j}\in \M(Q_{1}^{t})$ be a sequence of divergence free measures in $Q_{1}^{t}$ such that
	\begin{equation}\label{equilimitatezzamasse1}
	\sup_{j}\lvert\mu_{j}\lvert(Q_{1}^{t})\leq M<\infty
	\end{equation}
	and $\mu_{j}\cstar\mu=b\otimes t\h\mrestr(t\R\cap Q_{1}^{t})\in\M(Q_{1}^{t})$. Then, $\forall\epsilon>0$, $\forall\delta_{0}\in(0,1/2)$ there exist a sequence $\delta_{j}$, $\delta_{0}/2<\delta_{j}<\delta_{0}$ and a sequence  $\widetilde{\mu}_{j}\in\M(Q_{1}^{t})$ such that $\widetilde{\mu}_{j}=\mu_{j}\mrestr (Q_{1-\delta_{j}}^{t})^{\circ}+b\otimes t\h\mrestr(Q_{1}^{t}\setminus \bar{Q}_{1-\delta_{j}}^{t})+\nu_{j}$, where $\nu_{j}\in [\mathcal{M}(Q_{1}^{t})]^{m\times n}$ is such that $\mathrm{supp}(\nu_{j})\subset\partial Q_{1-\delta_{j}}$ and $\lvert\nu_{j}\lvert(Q_{1}^{t})<\epsilon$ for every $j$ large enough (by $(Q_{1-\delta_{j}}^{t})^{\circ}$ we have denoted the interior of the cube $(Q_{1-\delta_{j}}^{t})$).
\end{lemma}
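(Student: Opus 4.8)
The plan is to work in the language of currents and to realize $\widetilde{\mu}_j$ by cutting $\mu_j$ along a well-chosen cube $\partial Q_{1-\delta_j}^{t}$, gluing the fixed datum $b\otimes t\h$ outside, and repairing the resulting mismatch by a small measure $\nu_j$ supported on that cube. Writing $T_j=T(\mu_j)$ and denoting by $L$ the current associated with $b\otimes t\h\mrestr(t\R\cap Q_1^{t})$, set the \emph{defect} $D_j:=T_j-L$. Since $T_j$ and $L$ are closed in $Q_1^{t}$ and $\mu_j\cstar\mu$, the currents $D_j$ are closed, have mass bounded by $M+|b|$, and satisfy $D_j\cstar 0$. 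The measure in the statement has associated current $T_j\mrestr(Q_{1-\delta_j}^{t})^{\circ}+L\mrestr(Q_1^{t}\setminus\bar Q_{1-\delta_j}^{t})+N_j$, with $N_j$ the current of $\nu_j$. Let $u$ be the gauge with $\{u<r\}=(Q_r^{t})^{\circ}$, so that $u$ is Lipschitz with $\mathrm{Lip}(u)=2$ and $\{u=1-\delta\}=\partial Q_{1-\delta}^{t}$. Using that $T_j$ and $L$ are closed, a direct computation shows that the interior boundary of the first two summands equals, up to sign, the slice $\langle D_j,u,1-\delta_j\rangle$. Hence it suffices to choose $\delta_j$ and a $1$-current $N_j$ supported on $\partial Q_{1-\delta_j}^{t}$ with $\partial N_j=\langle D_j,u,1-\delta_j\rangle$ and $\Mb(N_j)<\epsilon$; this forces $\partial\widetilde{\mu}_j=0$, i.e. $\widetilde{\mu}_j\in\M(Q_1^{t})$.

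The crucial point, and the main obstacle, is the smallness of $\nu_j$. Slicing $\mu_j$ directly is useless: a mean value argument over $\delta\in(\delta_0/2,\delta_0)$ only bounds the slice mass of $T_j$ by $O(M/\delta_0)$, which is \emph{not} small. The remedy is to slice the defect, whose \emph{flat} size tends to zero. Viewing each of the $m$ components of $D_j$ as a scalar closed integral $1$-current and applying the Federer--Fleming compactness theorem componentwise, the bounds $\Mb(D_j)\le M+|b|$, $\partial D_j=0$ together with $D_j\cstar 0$ yield $\Fl(D_j)\to 0$ on the compact region $\{1-\delta_0\le u\le 1-\delta_0/2\}\sbb(Q_1^{t})^{\circ}$. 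Thus we may write $D_j=\partial A_j+B_j$ with $A_j,B_j$ integral and $\eta_j:=\Mb(A_j)+\Mb(B_j)\to 0$.

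I would then slice this decomposition. For a.e.\ level the slicing identity gives $\langle D_j,u,r\rangle=\partial\langle A_j,u,r\rangle+\langle B_j,u,r\rangle$, and the coarea inequality yields $\int_{\delta_0/2}^{\delta_0}\bigl(\Mb(\langle A_j,u,1-\delta\rangle)+\Mb(\langle B_j,u,1-\delta\rangle)\bigr)\,d\delta\le 2\eta_j$. Hence there is a good radius $\delta_j\in(\delta_0/2,\delta_0)$ for which both slice masses are at most $4\eta_j/\delta_0$. I then define $\nu_j$ as a filling of $\langle D_j,u,1-\delta_j\rangle$ inside the connected Lipschitz surface $\partial Q_{1-\delta_j}^{t}$: take $\langle A_j,u,1-\delta_j\rangle$ itself, which already fills the $\partial\langle A_j,u,1-\delta_j\rangle$ part, and add a minimal connection $\rho_j$ with $\partial\rho_j=\langle B_j,u,1-\delta_j\rangle$. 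The $0$-current $\langle B_j,u,1-\delta_j\rangle$ has zero total multiplicity in each component, since both $\langle D_j,u,1-\delta_j\rangle$ and $\partial\langle A_j,u,1-\delta_j\rangle$ are cycles, so by the isoperimetric inequality on the face, whose diameter is at most $\sqrt n$, it can be filled with $\Mb(\rho_j)\le\sqrt n\,\Mb(\langle B_j,u,1-\delta_j\rangle)$. Collecting components gives an integral, hence $\Z^{m}$-valued, $\nu_j$ supported on $\partial Q_{1-\delta_j}^{t}$ with $\Mb(\nu_j)\le(1+\sqrt n)\,4\eta_j/\delta_0$, which is $<\epsilon$ for $j$ large. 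Since the three summands are $\Z^{m}$-rectifiable and their interior boundaries cancel by construction, $\widetilde{\mu}_j\in\M(Q_1^{t})$, which completes the proof.
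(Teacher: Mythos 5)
Your argument is correct and follows essentially the same route as the paper: form the defect current $T(\mu_j)-T(b\otimes t\h\mrestr(t\R))$, use the equivalence of weak$^{*}$ and flat convergence under the uniform mass bound to make its flat norm small on a compact annulus, slice along a distance-type function and select a good level $\delta_j\in(\delta_0/2,\delta_0)$ by averaging, and then fill the resulting $0$-dimensional slice on $\partial Q_{1-\delta_j}$ by a $1$-current of small mass supported there. The only (harmless) presentational difference is the last step: the paper pushes the flat norm itself through the slicing inequality and then notes that the point part $R_j$ of an optimal flat decomposition of the slice must vanish because a nonzero $\Z^{m}$-valued $0$-rectifiable current has mass at least $1$, whereas you fix a near-optimal decomposition $D_j=\partial A_j+B_j$ first, slice it, and build the filling explicitly as the slice of $A_j$ plus a minimal connection of the slice of $B_j$ on the cube boundary.
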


\begin{proof} 

	Fix $\epsilon\in(0,1)$ and $\delta_{0}\in (0,1/2)$ two independent parameters and consider $d$, the distance function from $\partial Q_{1}^{t}$. We now slice the measure $\widehat{\mu}_{j}=\mu_{j}-\mu_{b,t}$, with $\mu_{b,t}=b\otimes t\h\mrestr(t\R)$, through the function $d$, Lipschitz continuous with $\mathrm{Lip}(d)=1$. 
	
	Let $\widehat{\gamma}_{j}$ and $\widehat{\theta}_{j}$ be the support and multiplicity of $\widehat{\mu}_{j}$, respectively. Now the idea is to exploit the weak$^*$ convergence of $\widehat{\mu}_{j}$ to zero in order to find a $\delta_j$ for which $\mathrm{supp}\,\widehat{\mu}_{j}\cap \partial Q_{1-\delta_j }$ is the boundary of weighted segments with small mass; this will allow us to construct the measure $\nu_j$ in the statement. 
	Even though this can done by hands using the measures in $\M(\Omega)$ it is convenient also in this case to use the tools of currents which give a more general argument not confined to the case of $1$ dimensional objects.
	
	We denote $\Tj=T(\widehat{\mu}_{j})$ the current associated to $\widehat{\mu}_{j}$ and for every $\delta\in [0,\delta_{0}]$ we consider the current obtained by slicing $\Tj$ along the level sets of the function $d$ which we denote by
	\begin{displaymath}
	\Tj[d,\delta+]=(\partial\Tj)\mrestr\{x:d(x)>\delta\}-\partial(\Tj\mrestr\{x:d(x)>\delta\}).
	\end{displaymath}
	Since $\partial\Tj=0$ in $Q_{1}^{t}$, we have 
	$\Tj[d,\delta+]=-\partial(\Tj\mrestr\{x:d(x)>\delta\})$, i.e.
	$$
	\langle \Tj[d,\delta+], \phi \rangle =\int_{\gamma_j\cap \{d>\delta\}} \theta_j \langle \nabla\varphi,\tau_j\rangle d\h
	$$
	for all $\varphi\in C^\infty(Q_1^t)$.
	For slicing of currents it holds 
	\begin{equation}\label{eqM}
	\int_{\frac{\delta_{0}}{2}}^{\delta_{0}}\Mb(\Tj[d,s+])\, ds\leq \mathrm{Lip}(d)\Mb_{Q_{1-\delta_{0}/2}^{t}\setminus Q_{1-\delta_{0}}^{t}}(\Tj)<\Mb_{Q_{1-\delta_{0}/2}^{t}}(\Tj)
	\end{equation}
	and 
	\begin{equation}\label{eqF}
	\int_{\frac{\delta_{0}}{2}}^{\delta_{0}}\mathbf{F}_{\bar{Q}_{1-\delta_{0}/2}^{t}\cap\{d(x)=s\}}(\Tj[d,s+])\,ds\leq\mathrm{Lip}(d)\mathbf{F}_{\bar{Q}_{1-\delta_{0}/2}^{t}}(\widehat{T}_{j}),
	\end{equation}
where $\mathbf{F}_Q(T)$ denote the flat norm of  the current $T$	(see the Appendix).
	Since, by  \eqref{equilimitatezzamasse1}, the sequence $\Tj$ is equi-bounded in mass, from (\ref{eqM}) we get that, for  a.e. $\delta\in(\delta_{0}/2,\delta_{0})$, $\Mb(\Tj[d,\delta+])$ is finite. Hence, since $\Tj[d,\delta+]$ is a 0-rectifiable current for almost every $\delta$,  we have
	\begin{equation}\label{eqM1}
	\Mb(\Tj[d,s+])=\sum_{x\in\partial Q_{1-\delta}\cap\widehat{\gamma}_{j}}\lvert\widehat{\theta}_{j}(x)\lvert, \quad \lvert\widehat{\theta}_{j}(x)\lvert\geq 1
	\end{equation} 
	where the sum runs through a finite number of points $x_{1},\ldots,x_{M}$,  with multiplicity $\widehat{\theta}_{j}(x_{i})$ and positive orientation if $\widehat{\gamma}_{j}$ exits from $Q_{1-\delta}$ at $x_{i}$ (these oriented points together with their multiplicity give  the boundary of $\widehat{T}_{j}\mrestr Q_{1-\delta}$).
	
	Moreover by \eqref{equilimitatezzamasse1}, we can exploit the equivalence between weak$^*$ convergence and convergence in the flat norm \cite[Theorem 31.2]{Sim}, and from the fact that $\widehat{T}_{j}\cstar 0$ to obtain the convergence to zero of the flat norm. Therefore $\forall\sigma<\frac{\delta_{0}}{4}\epsilon$ we can find $J_{\sigma}\in\N$ such that $\mathbf{F}_{\bar{Q}_{1-\delta_{0}/2}^{t}}(\widehat{T}_{j})<\sigma$ for all $j\geq J_{\sigma}$.	Thus, by (\ref{eqF}), for each $j\geq J_{\sigma}$, the sets
	\begin{equation}\label{aj}
	A_{j}=\bigg \{\delta\in(\delta_{0}/2,\delta_{0}): \mathbf{F}_{\bar{Q}_{1-\delta_{0}/2}^{t}\cap\{d(x)=\delta\}}(\Tj[d,\delta+])<\frac{4}{\delta_{0}}\sigma\bigg\}
	\end{equation}
	have positive $1$-dimensional Lebesgue measure. In fact 
	\begin{align*}
|A_{j}^{c}|&\leq\frac{\delta_{0}}{4\sigma}\int_{\frac{\delta_{0}}{2}}^{\delta_{0}}\mathbf{F}_{\bar{Q}_{1-\delta_{0}/2}^{t}\cap\{d(x)=s\}}(\Tj[d,s+])\,ds\\&\leq\frac{\delta_{0}}{4\sigma}\mathbf{F}_{Q_{1-\delta_{0}/2}^{t}}(\Tj)<\frac{\delta_{0}}{4},
	\end{align*} 
	hence
	\begin{displaymath}
|A_{j}|=\frac{\delta_{0}}{2}-|A_{j}^{c}|>\frac{\delta_{0}}{4}>0.
	\end{displaymath}
	Now, for each $j\geq J_{\sigma}$ choose a $\delta_{j}$ such that $\delta_{j}\in A_{j}$ and the sum (\ref{eqM1}) runs through a finite number of points.
	
	To get to the conclusion, we first show that the following minimum problems, well defined by our choice of $\delta_{j}$, have solution. By definition, for every $j\geq J_{\sigma}$,
	\begin{align*}
&	\mathbf{F}_{\bar{Q}_{1-\delta{0}/2}^{t}\cap\{d(x) =\delta_{j}\}}(\Tj[d,\delta_{j}+])\\
&\quad=\inf\{\Mb(R_{j})+\Mb(B_{j}): \mathrm{supp}(R_{j}),\,\mathrm{supp}(B_{j})\subset\partial Q_{1-\delta_{j}},R_{j}+\partial B_{j}=\Tj[d,\delta_{j}+]\}.	\end{align*}
	Since $R_{j}$ is a $\Z^{m}$-valued $0$-rectifiable current, then $\Mb(R_{j})>1$ and from
	\begin{displaymath}
	\mathbf{F}_{\bar{Q}_{1-\delta{0}/2}^{t}\cap\{d(x)=\delta_{j}\}}(\Tj[d,\delta_{j}+])<\epsilon< 1
	\end{displaymath}  
	we can conclude that $R_{j}=0$. It follows that, for every $j\geq J_{\sigma}$,
	\begin{align*}
&	\mathbf{F}_{\bar{Q}_{1-\delta{0}/2}^{t}\cap\{d(x)
=\delta_{j}\}}(\Tj[d,\delta_{j}+])\\
	&\quad=\inf \{\mathbf{M}(B_{j}):\  B_{j}\in\mathcal{R}_{1}(Q_{1}^{t};\Z^{m}),\mathrm{supp}(B_{j})\subset\partial Q_{1-\delta_{j}},\,\partial  B_{j}=\Tj[d,\delta_{j}+]\}
	\end{align*}
	It's easily checked that, for all $j\geq J_{\sigma}$, the set over which we take the above infimum is not empty.
	Hence $\forall j\geq J_{\sigma}$, by means of the direct method, there exists a $1$-rectifiable current $S_{j}\in \mathcal{R}_{1}(Q_{1}^{t},\Z^{m})$ that satisfies the following properties
	\begin{enumerate}[(i)]
		\item $\mathrm{supp}(S_{j})\subset\partial Q_{1-\delta_{j}}$;
		\item $\Mb(S_{j})=\mathbf{F}_{\bar{Q}_{1-\delta_{0}/2}^{t}\cap\{d(x)=\delta_{j}\}}(\Tj[d,\delta_{j}+])<\frac{4}{\delta_{0}}\sigma <\epsilon$;
		\item $\partial S_{j}=\Tj[d,\delta_{j}+]$. 
	\end{enumerate}
	Now  
	we conclude defining $\nu_j= \mu(S_j)$  for $j\geq J_{\sigma}$  and  equal to $\nu_j = b\otimes t\h\mrestr(Q_{1-\delta_{j}}^{t})^{\circ}-\mu_{j}\mrestr (Q_{1-\delta_{j}}^{t})^{\circ}$ otherwise.

\end{proof}

\begin{remark}\label{rklem2}  By a scaling argument the result in Lemma \ref{lem2} still hold in the domain $Q_T^t$. 
	
	Moreover by a diagonal argument  under the assumptions of Lemma \ref{lem2} one could show that, for every $\delta$, 
there exists 
a sequence  $\widetilde{\mu}_{j}\in\M(Q_{T}^{t})$ such that $\widetilde{\mu}_{j}\cstar b\otimes t\h\mrestr Q_{T}^{t}$, with   $\mathrm{supp}(\widetilde{\mu}_{j}- b\otimes t\h\mrestr Q_{T}^{t}+\nu_{j})\subset Q^t_{T(1-\delta)}$. Moreover, exploiting the construction of $\widetilde{\mu}_{j}$ and its decomposition in mutually singular measures we also deduce that
\begin{equation}\label{rklem2-eq}
\lim_j F_1(\widetilde{\mu}_{j}, Q_T^t)= \lim_j F_1(\mu_j, Q^t_{T(1-\delta)}) + F_1 (b\otimes t\h\mrestr Q_{T}^{t}, Q_T^t\setminus Q^t_{T(1-\delta) }).
\end{equation}
A further diagonal argument produces a sequence $\hat\mu_j$ such that $\hat{\mu}_{j}\cstar b\otimes t\h\mrestr Q_{T}^{t}$, with   $\mathrm{supp}(\hat{\mu}_{j}- b\otimes t\h\mrestr Q_{T}^{t}+\nu_{j})\subset\subset Q^t_{T}$ and
\begin{equation}\label{rklem2-eq2}
\lim_j F_1(\hat{\mu}_{j}, Q_T^t)\leq \lim_j F_1(\mu_j, Q^t_{T}). 
\end{equation}
	
\end{remark}

Using the above lemma we finally prove the following proposition which is essentially the $\Gamma$-convergence result when the limiting configuration is given by the measure $\mu_{b,t}:=b \otimes t \mathcal{H}^{1}\mrestr(t \mathbb{R})$, i.e., straight with constant multiplicity.

\begin{proposition}\label{prop-flat-limit}
Let $(y_{T})$ be a family of points in $\mathbb{R}^{n}$, with 
 $T\in\R$ and $\psi$ as in Theorem \ref{thm1}. For all $b\in \mathbb{Z}^{m}$ and $t\in\mathcal{S}^{n-1}$  we define
\begin{equation*}
\psi^*(b,t):=\inf\left\{\liminf_{L\to +\infty} F_\frac{1}{L}(\mu_L,Q_{1}^{t}(y_L)): \mu_L\in \M(\R^n) ,\, \ \mu_{L}- b \otimes t \mathcal{H}^{1}\mrestr(y_L+t \mathbb{R})\cstar 0\right\}\,.
 \end{equation*}
 Then
\begin{equation}\label{mt3}
 \psi_{\mathrm{\hom}}(b,t)=\psi^*(b,t).
\end{equation}

\end{proposition}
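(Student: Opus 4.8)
The plan is to establish the two inequalities $\psi^*(b,t)\le\psi_{\hom}(b,t)$ and $\psi_{\hom}(b,t)\le\psi^*(b,t)$ separately, after recording a scaling identity that places both quantities on the unit cube. For a measure $\nu\in\M(\R^n)$ with $\su(\nu-b\otimes t\h\mrestr(t\R))\subset Q_1^t$ and its dilation $\mu=(\Phi_T)_\sharp\nu$, $\Phi_T(x)=Tx$, a change of variables in the $1$-dimensional integral gives $F_1(\mu,Q_T^t)=T\,F_{1/T}(\nu,Q_1^t)$, while $\Phi_T$ fixes the straight configuration $b\otimes t\h\mrestr(t\R)$ and carries the support constraint $\su(\mu-b\otimes t\h\mrestr(t\R))\subset Q_T^t$ into $\su(\nu-b\otimes t\h\mrestr(t\R))\subset Q_1^t$. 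Writing $n^{(T)}:=\inf\{F_{1/T}(\nu,Q_1^t):\nu\in\M(\R^n),\ \su(\nu-b\otimes t\h\mrestr(t\R))\subset Q_1^t\}$, the cell formula \eqref{cell-problem} therefore reads $\psi_{\hom}(b,t)=\lim_T\tfrac1T m^{(T)}=\lim_T n^{(T)}$, and by Proposition \ref{prop1} and Remark \ref{uniform-cell} this limit exists and is unchanged if the unit cube is recentred at any $y_T$. Thus $\psi^*$ and $\psi_{\hom}$ differ only in how the boundary datum is enforced: an \emph{exact} support constraint in $n^{(T)}$ versus the weak$^*$ constraint in $\psi^*$.

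For the upper bound $\psi^*\le\psi_{\hom}$ I would argue by construction. Fix $\eta>0$ and, for large $T$, a competitor $\mu^{(T)}$ for $m^{(T)}$ with $\tfrac1T F_1(\mu^{(T)},Q_T^t)\le\psi_{\hom}(b,t)+\eta$ and $\su(\mu^{(T)}-b\otimes t\h\mrestr(t\R))\subset Q_T^t$. With $\epsilon=1/L$, tile $Q_1^t$ along $t\R$ by $\sim 1/(\epsilon T)$ back-to-back copies of the $\epsilon$-dilation of $\mu^{(T)}$, centred on the line, filling the residual fractional strip with $b\otimes t\h\mrestr(t\R)$. Because $\mu^{(T)}$ carries exactly the datum $b\otimes t$ on the two faces orthogonal to $t$, consecutive copies and the terminal segment match seamlessly at the crossing point with multiplicity $b$, so the resulting $\mu_L$ is closed, i.e.\ lies in $\M(\R^n)$. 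The change of variables gives $F_{1/L}(\mu_L,Q_1^t)\le\tfrac1T F_1(\mu^{(T)},Q_T^t)+c_1|b|\,\epsilon T$, the last term bounding the residual strip via the upper growth bound in \eqref{growth} and vanishing as $L\to\infty$. For the constraint $\mu_L\cstar b\otimes t\h\mrestr(t\R\cap Q_1^t)$ I would use that each dilated copy $S$ has boundary $b(\delta_{p_+}-\delta_{p_-})$ with $p_+-p_-=\epsilon T\,t$, so testing a $1$-form $\phi$ and splitting $\phi=\phi(z)+(\phi-\phi(z))$ at the cell centre $z$ leaves the exact term $b\langle\phi(z),\epsilon T\,t\rangle$ plus an error bounded by $\mathrm{Lip}(\phi)$ times cell diameter times cell mass; summing the $\sim1/(\epsilon T)$ cells produces a Riemann sum converging to $\langle b\otimes t\h\mrestr(t\R\cap Q_1^t),\phi\rangle$ with total error $o(1)$. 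Taking $y_L=0$ makes $(\mu_L)$ admissible for $\psi^*$ and yields $\psi^*(b,t)\le\psi_{\hom}(b,t)+\eta$; letting $\eta\to0$ concludes.

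For the lower bound $\psi_{\hom}\le\psi^*$ I would rigidify the boundary datum. Let $(\mu_L)$ be any competitor for $\psi^*$, so $\mu_L-b\otimes t\h\mrestr(y_L+t\R)\cstar0$ on $Q_1^t(y_L)$ with $\liminf_L F_{1/L}(\mu_L,Q_1^t(y_L))=\ell$; by coercivity from \eqref{growth} the masses are bounded along the subsequence realising $\ell$. Apply Lemma \ref{lem2}, in the recentred form of Remark \ref{rklem2}, to replace $\mu_L$ on a boundary layer $Q_1^t\setminus Q_{1-\delta_L}^t$ of width $\delta_L\in(\delta_0/2,\delta_0)$ by $b\otimes t\h\mrestr(t\R)$ corrected by a measure $\nu_L$ with $\su\nu_L\subset\partial Q_{1-\delta_L}$ and $|\nu_L|(Q_1^t)<\epsilon_0$, producing $\widetilde\mu_L\in\M(\R^n)$ that coincides with $\mu_L$ on $(Q_{1-\delta_L}^t)^\circ$ and satisfies the exact constraint $\su(\widetilde\mu_L-b\otimes t\h\mrestr(t\R))\subset Q_1^t$. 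Since the integrand obeys the $\epsilon$-independent bound $F_{1/L}(\cdot)\le c_1|\cdot|$ from \eqref{growth}, the surgery costs $F_{1/L}(\widetilde\mu_L,Q_1^t)\le F_{1/L}(\mu_L,Q_1^t)+c_1|b|\delta_0+c_1\epsilon_0$, uniformly in $L$. As $\widetilde\mu_L$ is admissible for $n^{(L)}$ recentred at $y_L$, the scaling identity and the centre independence of Proposition \ref{prop1} give $\psi_{\hom}(b,t)=\lim_L n^{(L)}\le\ell+c_1|b|\delta_0+c_1\epsilon_0$; letting $\delta_0,\epsilon_0\to0$ and taking the infimum over competitors yields $\psi_{\hom}\le\psi^*$.

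The main obstacle is concentrated in the lower bound: converting the weak$^*$ boundary condition into the exact one demanded by the cell formula while keeping the configuration divergence free and paying only a vanishing energy toll. This is precisely what Lemma \ref{lem2} supplies, through the slicing of $\widehat T_L=T(\mu_L-b\otimes t\h\mrestr(t\R))$ along level sets of the distance to $\partial Q_1^t$ together with the equivalence of weak$^*$ and flat convergence, which selects a radius $\delta_L$ at which the slice has small flat norm and can be capped by a $1$-rectifiable current $S_L$ of mass $<\epsilon_0$ supported on $\partial Q_{1-\delta_L}$, thereby restoring closedness. The one point to watch in assembling the proposition is that this boundary-layer energy be controlled \emph{uniformly in the oscillation scale} $\epsilon=1/L$, which holds because the upper growth bound in \eqref{growth} does not see the fast variable.
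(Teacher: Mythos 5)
Your lower bound ($\psi_{\hom}\le\psi^*$) is essentially the paper's argument: rigidify the weak$^*$ boundary datum via Lemma \ref{lem2} and Remark \ref{rklem2}, then rescale to compare with the cell problem; your bookkeeping of the error $c_1|b|\delta_0+c_1\epsilon_0$ is if anything more explicit than the paper's appeal to \eqref{rklem2-eq2}. The upper bound, however, has a genuine gap. You tile $Q_1^t$ with translated copies of the $\epsilon$-dilation of a \emph{single} near-minimizer $\mu^{(T)}$ of $m^{(T)}$ and claim that each copy contributes $\epsilon F_1(\mu^{(T)},Q_T^t)$. But the $i$-th copy is centred at $i\epsilon T t$, so after undoing the dilation it sees the integrand $\psi(\cdot+iTt,\theta,\tau)$; since $Tt\notin\Z^n$ in general (for no $T$ at all when $t$ points in an irrational direction), the $1$-periodicity of $\psi$ does not identify this with $\psi(\cdot,\theta,\tau)$, and the translate of $\mu^{(T)}$ is merely an admissible competitor, not a near-minimizer, of the cell problem centred at $iTt$. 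The only estimate then available for that copy is the crude bound $c_1\Mb(\mu^{(T)})\le (c_1^2/c_0)\lvert b\lvert T$ coming from \eqref{growth}, so as written your construction only yields $\psi^*(b,t)\le (c_1^2/c_0)\lvert b\lvert$, not $\psi^*\le\psi_{\hom}$.

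The missing ingredient is exactly the content of Remark \ref{uniform-cell}: the limit in Proposition \ref{prop1} is uniform in the centre $x_T$, so for a single sufficiently large $T$ one may choose, for \emph{each} centre $x_i$, its \emph{own} near-minimizer $\mu_i^{(T)}$ of the problem \eqref{mt} posed on $Q_T^t(x_i)$, all satisfying $\frac1T F_1(\mu_i^{(T)},Q_T^t(x_i))\le\psi_{\hom}(b,t)+\epsilon$; these are then dilated and glued exactly as you propose (the common boundary datum $b\otimes t\h\mrestr(t\R)$ keeps the glued measure divergence free, and your Riemann-sum verification of the weak$^*$ convergence, which the paper leaves implicit, is correct). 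Alternatively one can snap the centres to points of $x_T+\Z^n$ and pay for short connecting segments, as in the proof of Proposition \ref{prop1}. With either repair your argument coincides with the paper's.
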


\begin{proof}

In order to show one inequality we construct a sequence $\mu_L$ admissible for the definition of $\psi^*(b,t)$. 

As above by the periodicity of the problem we can reduce to the case in which $y_L=0$. 
For any $T>0$ we consider a  family of equispaced points $(x_{i})$ with spacing $T$ on $t\R$. Given $\epsilon>0$, thanks to Remark \ref{uniform-cell}, we can find $T$ large enough such that we find  $\mu_i^{(T)}=\theta_i^{(T)}\otimes\tau_i^{(T)} d \h \LL \gamma_i^{(T)}$  a test measure for the minimum problem $m^{(T)}$ in (\ref{mt}) with $x_T=x_i$ such that
\begin{equation}\label{eqminmis}
\frac{1}{T}\int_{Q_{T}^{t}(x_i)\cap\gamma_i^{(T)}}\psi(y,\theta_i^{(T)},\tau_i^{(T)})\,d\h\leq \psi_{\mathrm{hom}}(b,t)+\epsilon.
\end{equation}
 In particular we have that $\mathrm{supp}(\mu_i^{(T)} - b \otimes t \mathcal{H}^{1}\mrestr(t \mathbb{R}))\subset Q_{T}^{t}(x_i)$. With this condition we can rescale and glue the measures $\mu_i^{(T)}$ and construct a sequence which is admissible for $\psi^*(b,t)$. Precisely, given $L>0$  we define the function $f^{1/L}(x)=\frac{x}{L}$ and denote $I_L=\{i:\ Q_{T/L}^t(x_i/L)\subset Q_{1}^{t}\}$,  $R^L= t\R\setminus \cup_{i\in I_T} Q_{T/L}^t(x_i/L)$, then the measure
\begin{displaymath}	
\mu_L=\sum_{i\in I_L} f_{\sharp}^{1/L}(\mu_i^{(T)})\LL Q_{T/L}^t(x_i/L) + b \otimes t \mathcal{H}^{1}\mrestr(t \mathbb{R}\cap R^L)
\end{displaymath}
is divergence free and satisfies $\mu_L\cstar b \otimes t \mathcal{H}^{1}\mrestr(t \mathbb{R})$ as $L\to\infty$. Therefore using the notation $\mu_L=\theta_L\otimes \tau_L \h\LL\gamma_L$ we have
\begin{align*}
F_\frac{1}{L}(\mu_L,Q_{1}^{t})=
&
\int_{Q_{1}^{t}\cap\gamma_L}\psi\big(Ly,\theta_L(y),\tau_L(y)\big)d\h(y)= 
\\&
= \sum_{i\in I_L}  F_\frac{1}{L}( f_{\sharp}^{1/L}(\mu_i^{(T)}),Q_{T/L}^t(x_i/L)) +\int_{R^{L}\cap Q_1^t}\psi(y,b,t)d\h(y)
\\&
=\sum_{i\in I_L}\frac1L F_1( \mu_i^{(T)},Q_{T}^t(x_i))+
\int_{R^{L}\cap Q_1^t}\psi(y,b,t)d\h(y)
\\&
\leq {\sharp(I_L)}\frac{T}{L} (\psi_{\mathrm{\hom}}(b,t)+\epsilon) +c_1 |b| \h(R^{L}\cap Q_1^t)
\\&
\leq \bigg[\frac{L}{T}\bigg ] \frac{T}{L}(\psi_{\mathrm{\hom}}(b,t)+\epsilon)+c_1 |b|\frac{T}L.
\end{align*} 
Thus
$$
\psi^*(b,t)\leq  \psi_{\mathrm{\hom}}(b,t)+\epsilon
$$
which gives one inequality.

In order to obtain the opposite inequality it is enough to observe that given $L$  and fixing  a sequence $\mu_L$ admissible for the minimum problem $\psi^*(b,t)$, by means  of Remark \ref{rklem2}  and \eqref{rklem2-eq2}, we find a sequence $\hat\mu_L$ satisfying  $\mathrm{supp}(\hat{\mu}_{L}- b\otimes t\h\mrestr t\R)\subset\subset Q^t_{1}$ such that
\begin{equation}\label{bordo1}
 \liminf_L F_{\frac1L}(\hat\mu_L, Q^t_{1})\leq\liminf_L F_{\frac1L}({\mu}_{L}, Q_1^t).
\end{equation}
Then  rescaling by $L$ the measure $\tilde\mu_L:=f_\sharp^{L}(\hat\mu_L)$  satisfies $\mathrm{supp}(\tilde\mu_{L}- b\otimes t\h\mrestr t\R)\subset\subset Q^t_{L}$, and
$$
m^{(L)}\leq \frac1L F_1(\tilde\mu_{L}, Q^t_{L})=F_{\frac1L}(\hat\mu_L,Q_1^t)
$$
which together with \eqref{bordo1} concludes the proof.
\end{proof}

\begin{remark}\label{upper-rem}
From the proof of the above proposition we also deduce that given $b\in \Z^m$ and $t\in \mathcal{S}^{n-1}$ for all $R>0$ there exists a sequence $\mu_j\cstar b\otimes t d\h\LL t\R$ such that ${\mathrm{supp}}(\mu_j-b\otimes t d\h\LL t\R)\subset Q^t_R(x_j)$ and
$$
\psi_{\hom}(b,t)R=\lim_{j\to\infty} F_{\epsilon_j}(\mu_j, Q_R(x_j)).
$$
\end{remark}

\subsection{Proof of the liminf inequality by blow-up}\label{lb}
\phantom{aa}

\par

We are ready to prove the Theorem \ref{thm1}. We start by proving the liminf inequality, i.e., we need to show that for all $\mu_{j}\in\M(\Omega)$, $\mu_{j}\cstar \mu$ and $\epsilon_{j}\to 0^{+}$ one has
\begin{displaymath}
F_{\mathrm{hom}}(\mu)\leq \liminf_{j\to\infty}F_{\epsilon_{j}}(\mu_{j}).
\end{displaymath} 

Let $\epsilon_{j}$, $\mu_{j}$ be as above; it's not restrictive to suppose that $\liminf_{j}F_{\epsilon_{j}}(\mu_{j})$ is finite and, by compactness (Theorem \ref{compact-th}), $\mu=\theta\otimes\tau\h\mrestr\gamma\in\M(\Omega)$. \\

We head to the conclusion in three steps.

{\em Step 1. (localization and decomposition)} 
For all $j\in\N$ we denote  $\nu_{j}$ the  positive measures
\begin{equation}\label{eqo}
\nu_{j}=\psi\bigg(\frac{y}{\epsilon_{j}},\theta_{j},\tau_{j}\bigg)\h\mrestr\gamma_{j}.
\end{equation}
which is the energy density of the 
 localised functional
\begin{equation}
F_{\epsilon_{j}}(\mu_{j},A)=\int_{A\cap\gamma_{j}}\psi\bigg(\frac{y}{\epsilon_{j}},\theta_{j},\tau_{j}\bigg)d\h.
\end{equation}

By the assumptions on $\psi$, the sequence $\nu_{j}$ is equi-bounded hence, by compactness, there exists a positive Radon measure $\nu$ on $\Omega$ such that, up to a subsequence, $\nu_{j}\cstar\nu$. Now consider the Radon-Nikodym decomposition of the measure $\nu$ with respect to the $1$-dimensional Hausdorff measure restricted to $\gamma$, i.e.,
\begin{equation} 
\nu=g\h\mrestr\gamma+\nu^{s}
\end{equation}
where $g=\frac{d\nu}{d\h\mrestr\gamma}$ is the density of the part of $\nu$ which is absolutely continuous with respect to $\h\mrestr\gamma$ and the singular part is denoted by $\nu^{s}$  (and it is a positive measure as well).

{\em Step 2. (definition of the blow-up)} Let $\xo\in \Omega\cap\gamma$ be a Lebesgue point for $g$ with respect to $\h\mrestr\gamma$. We can write
\begin{equation}\label{eq1}
g(\xo)=\lim_{\rho\to 0^{+}}\frac{\nu(Q_{\rho}^{t}(\xo))}{\h(Q_{\rho}^{t}(\xo)\cap\gamma)}=\lim_{\rho\to 0^{+}}\frac{\nu(Q_{\rho}^{t}(\xo))}{\rho}
\end{equation}
where $t=\tau(\xo)$ and the last equality holds for $\h$-a.e $x_{0}\in\gamma$ by the Besicovitch-Marstrand-Mattila Theorem.
The Besicovitch derivation theorem ensures that $\h$-a.e $x_{0}\in\Omega\cap\gamma$ is a Lebesgue point for $\nu$ with respect to $\h\mrestr\gamma$, i.e., a Lebesgue point for $g$; moreover we can also assume that $x_{0}$ is a Lebesgue point for $\theta$ and $\tau$. By definition of  $\h$-rectifiability and approximate tangent space and then we can assume that $\h$-a.e $\xo\in\gamma\cap\Omega$ satisfies the following property
\begin{equation}\label{eqa}
\theta(x_{0}+\rho y)\otimes\tau(x_{0}+\rho y)\h\mrestr\frac{\gamma-x_{0}}{\rho}\cstar\mline
\end{equation}
with $b=\theta(\xo)$, in every open, bounded subset of $\R^{n}$; in fact it follows from Theorem \ref{struttura} that $\gamma$ is the union of countably many closed Lipschitz curves on which $\theta$ is constant, hence $\theta$ is $\h\mrestr\gamma$-measurable and integrable. Since $\nu$ is finite, we have $\nu(\partial Q_{\rho}^{t}(\xo))=0$ up to at most countably many  $\rho>0$. For all such $\rho$ it holds
\begin{equation}\label{eq2}
\nu(Q_{\rho}^{t}(\xo))=\lim_{j\to\infty}\nu_{j}(Q_{\rho}^{t}(\xo)).
\end{equation}

Moreover for every $\rho>0$ we have
\begin{equation}\label{eqa2}
\theta_j(x_{0}+\rho y)\otimes\tau_j(x_{0}+\rho y)\h\mrestr\frac{\gamma_j-x_{0}}{\rho}\cstar\theta(x_{0}+\rho y)\otimes\tau(x_{0}+\rho y)\h\mrestr\frac{\gamma-x_{0}}{\rho}
\end{equation}
as $j\to +\infty$.

Then by a diagonalization argument on (\ref{eq1}), (\ref{eq2}), and \eqref{eqa2} we can extract a subsequence $\rho_{j}$ such that
\begin{equation}\label{eq3}
g(x_0)=\frac{d\nu}{d\h\mrestr\gamma}(\xo)=\lim_{j\to\infty}\frac{\nu_{j}(Q_{\rho_{j}}^{t}(\xo))}{\h(Q_{\rho_{j}}^{t}(\xo)\cap\gamma)}.
\end{equation} 
\begin{equation}\label{eqa4}
\theta_j(x_{0}+\rho_j y)\otimes\tau_j(x_{0}+\rho_j y)\h\mrestr\frac{\gamma_j-x_{0}}{\rho_j}\cstar\mline
\end{equation}
and $\frac{\rho_{j}}{\epsilon_{j}}\rightarrow\infty$ as $j\to\infty$.

{\em Step 3. (lower bound for the blow-up)}  Recalling the expression of $\nu_{j}$ and \eqref{eq1}, (\ref{eq3}) is equivalent to
\begin{equation}\label{eq4}
g(\xo)=\lim_{j\to\infty}\frac{1}{\rho_{j}}\int_{Q_{\rho_{j}}^{t}(\xo)\cap\gamma_{j}}\psi\bigg(\frac{y}{\epsilon_{j}},\theta_{j},\tau_{j}\bigg)d\h.
\end{equation}
By a change of variable we get
\begin{equation}\label{eq4bis}
\frac{1}{\rho_{j}}\int_{Q_{\rho_{j}}^{t}(\xo)\cap\gamma_{j}}\psi\bigg(\frac{y}{\epsilon_{j}},\theta_{j},\tau_{j}\bigg)d\h=\int_{Q_{1}^{t}(\frac{\xo}{\rho_j})\cap\gamma_{j}}\psi
\bigg(\frac{\rho_j}{\epsilon_{j}}x,\theta_{j}(\rho_jx),\tau_{j}(\rho_jx)\bigg)d\h.
\end{equation}
Then denoting by $\tilde\mu_j=\theta_{j}(\rho_{j}x)\otimes\tau_{j}(\rho_{j}x)\h\mrestr\frac{\gamma}{\rho_{j}}$
which, in view of \eqref{eqa4}, satisfies
$\widetilde{\mu}_{j}\cstar\mline$,
we have
\begin{displaymath}
g(\xo)=\lim_{j\to\infty}F_{\frac{\rho_j}{\epsilon_{j}}}\left(\tilde\mu_j, Q_{1}^{t}\left(\displaystyle{\frac{\xo}{\rho_j}}\right)\right).
\end{displaymath}
Now applying Proposition \ref{prop-flat-limit} we conclude that
$$
g(x_0)\geq \psi_{\mathrm{hom}}(b,t).
$$
Therefore for $\h$-a.e. $\xo\in \Omega\cap\gamma$ one has
\begin{equation}\label{eq7}
\frac{d\nu}{d\h\mrestr\gamma}(\xo)\geq\psi_{\mathrm{hom}}(\theta(x_0),\tau(x_0)).
\end{equation}

{\em Step 3. (conclusion)} The liminf inequality is achieved by integrating over $\Omega\cap\gamma$. In fact from $(\ref{eqo})$ and $(\ref{eq7})$ we get
\begin{displaymath}
\nu(\Omega)\geq\int_{\Omega\cap\gamma}\frac{d\nu}{d\h\mrestr\gamma} d\h\geq\int_{\Omega\cap\gamma}\psi_{\mathrm{hom}}(\theta,\tau)d\h.
\end{displaymath}
Since $\nu_{j}\cstar\nu$, it follows that $\liminf_{j\to\infty}\nu_{j}(\Omega)\geq\nu(\Omega)$ and so
\begin{align*}
\liminf_{j\to\infty}F_{\epsilon_{j}}(\mu_{j})=\liminf_{j\to\infty}\nu_{j}(\Omega)&\geq\nu(\Omega)\\&\geq\int_{\Omega\cap\gamma}\psi_{\mathrm{hom}}(\theta,\tau)d\h = F_{\mathrm{hom}}(\mu)
\end{align*}
as desired.

\subsection{The limsup inequality}\label{upper}
We complete the proof by exhibiting the construction of a recovery sequence, i.e., given a target measure $\mu\in\M(\Omega)$ we have to find a sequence $\mu_{\epsilon_{j}}\in\M(\Omega)$ such that $\mu_{\epsilon_{j}}\cstar\mu$ and
\begin{displaymath}
\limsup_{j\to\infty}F_{\epsilon_{j}}(\mu_{\epsilon_{j}})\leq F_{\mathrm{hom}}(\mu)
\end{displaymath}
for all $\epsilon_{j}\to 0^{+}$. Using a standard diagonal argument it suffices to show the construction for a dense family. Here we consider the set of measures in $\M(\Omega)$ which are supported on a polyhedral curve $\gamma$. This density result is a consequence of the corresponding result for currents (see Appendix).

{\em Step 1: (polyhedral measures)} Now let $\mu=\sum_{i=1}^{N}b_{i}\otimes t_{i}\h\mrestr\gamma_{i}\in\M(\R^{n})$ be a polyhedral measure, in the sense that the $\gamma_{i}$ are disjoint segments (up to the endpoints), $b_{i}\in\Z^{m}$, $t_{i}\in\Sa^{n-1}$ for $i=1,\ldots,N$. Let $\gamma=\cup_{i=1}^{N}\gamma_{i}$. We cover $\gamma\cap\Omega$, up to a $\h$-null set, with $N$ families of countably many disjoint cubes $\{Q^{k}_i=Q^{t_i}_{r_{k,i}}(x_{k,i})\}_{k\in\N}$ and $i=1,\ldots,N$ which are contained in $\Omega$ and have the property that $\gamma\cap Q^{k}_i=\gamma_i\cap Q^{k}_i$ with $\gamma_i$ though the centre of the cube  so that
\begin{displaymath}
\mu\mrestr Q^{k}_i=b_{i}\otimes t_{i}\h\LL(x_{k,i}+t_i\R)
\end{displaymath}
for $i\in\{1,\ldots ,N\}$. 
In particular $\sum_k r_{k,i}=\h(\gamma_i)$ for every $i=1,\ldots,N$.

By Proposition \ref{prop-flat-limit} and Remark \ref{upper-rem}, for all $k\in\N$ and $i\in\{1,\ldots ,N\}$, there exists a sequence $\mu_{\epsilon_{j}}^{k,i}\in\M(Q^{k}_i)$ such that $\mu_{\epsilon_{j}}^{k,i}\cstar\mu\mrestr Q^{k}_i$, $\su(\mu_{\epsilon_{j}}^{k,i}-b_{i}\otimes t_{i}\h\mrestr(x_{k,i}+\R t_{i}))\subset Q^{k}_i$ and
\begin{equation}\label{dalpasso1}
\lim_j F_{\epsilon_{j}}(\mu_{\epsilon_{j}}^{k,i},Q^{k}_i)\leq r_{k,i}\psi_{\mathrm{hom}}(b_{i},t_{i}).
\end{equation}
Finally define $\mu_{\epsilon_{j}}=\sum_{i=1}^N\sum_{k\in\N}\mu_{\epsilon_{j}}^{k,i}$. By the properties of $\mu_{j}^{k,i}$ we have that $\mu_{\epsilon_{j}}\in\M(\Omega)$, $\mu_{\epsilon_{j}}\cstar\mu$ and
\begin{displaymath}
\lim_j F_{\epsilon_{j}}(\mu_{\epsilon_{j}})\leq F_{\mathrm{hom}}(\mu)
\end{displaymath}
which concludes the proof for $\mu$ polyhedral.

 {\em Step 2: (general measures)} Finally, we deduce the inequality for all measure $\mu\in\M(\Omega)$. We first extend $\mu$ to a measure  $\mathcal{E}\mu\in\M(\R^{n})$, with
\begin{displaymath}
\lim_{\delta\to 0}\lvert\mathcal{E}\mu\lvert(\Omega_{\delta}\setminus\Omega)=0
\end{displaymath}
where $\Omega_{\delta}=\{x:\mathrm{dist}(x,\Omega)<\delta\}$ (see  the Appendix). By Theorem \ref{densitapolichiuse} 
there exist a sequence of polyhedral measures $\mu_{k}\in\M(\R^{n})$ and a sequence of $C^{1}$ and bi-Lipschitz maps $f_{k}$ such that
\begin{displaymath}
\lvert\mu_{k}-(f_{k})_{\sharp}\mathcal{E}\mu\lvert(\R^{n})\to 0,\,\,\|f_{k}-x\|_{L^{\infty}}\to 0,\,\,\|Df_{k}-\mathrm{Id}\|_{L^{\infty}}\to 0.
\end{displaymath}
This implies $\mu_{k}\cstar\mathcal{E}\mu$. By Lemma \ref{continuitasottodef}, the continuity of $\psi_{\mathrm{hom}}$ in $t\in\Sa^{n-1}$ and the invariance under deformations of the multiplicity map one obtains 
\begin{align}\label{jjjj}
F_{\mathrm{hom}}(\mu_{k},\Omega)&\leq F_{\mathrm{hom}}((f_{k})_{\sharp}\mathcal{E}\mu,\Omega)+c\|\mu_{k}-(f_{k})_{\sharp}\mathcal{E}\mu\|\nonumber\\&\leq F_{\mathrm{hom}}(\mathcal{E}\mu_{k},\Omega_{\delta_{k}})(1+c\|Df_{k}-\mathrm{Id}\|_{L^{\infty}})+c\|\mu_{k}-(f_{k})_{\sharp}\mathcal{E}\mu\|
\end{align}
where $\delta_{k}=\|f_{k}-x\|_{L^{\infty}}\to 0$. Taking the limit in (\ref{jjjj}) we get
\begin{equation}\label{kkkk}
\limsup_{k\to\infty}F_{\mathrm{hom}}(\mu_{k},\Omega)\leq F_{\mathrm{hom}}(\mu,\Omega)=F_{\mathrm{hom}}(\mu).
\end{equation}
It then follows from the lower semicontinuity with respect to the weak$^{\ast}$ convergence of $\Gamma\textrm{-}\limsup_{j}F_{\epsilon_{j}}$, the definition of $\Gamma\textrm{-}\limsup$, Step 1 and  (\ref{kkkk}) that
\begin{align*}
\Gamma\hy{-}\limsup_{j}F_{\epsilon_{j}}(\mu)&\leq \liminf_{k}(\Gamma\hy{-}\limsup_{j}F_{\epsilon_{j}}(\mu_{k}))\\&\leq\liminf_{k}F_{\mathrm{hom}}(\mu_{k})\leq\limsup_{k}F_{\mathrm{hom}}(\mu_{k})\leq F_{\mathrm{hom}}(\mu)
\end{align*}
as desired.

\section*{Appendix: Some results for rectifiable currents}

For convenience of the reader here we give the basic definitions and properties for currents in the form that is used in the paper.

\smallskip
{\em Mass of a current:} The total variation of the rectifiable current in (\ref{def1current}) is the measure $\| T\|=\lvert\theta\lvert\h\mrestr\gamma$, its mass is
\begin{displaymath}
\Mb(T)=\| T\|(\Omega)=\int_{\gamma}\lvert\theta\lvert d\h,
\end{displaymath}
and it gives the weighted length of the current $T$ with respect to the Euclidean norm $\lvert	\cdot\lvert$ on $\Z^{m}$. Moreover for any open subset $W\subset\Omega$ we denote $\Mb_{W}(T)=\| T\|(W)$ and we can define the support of the current $T$ as $\mathrm{supp}\| T\|$

\smallskip
{\em Flat norm:} The flat norm of the $k$-current $T$  is defined as follows: for all $W\sbb\Omega$ let
\begin{align*}
\Fl_{W}(T):=\inf\{\Mb_{W}(A)+&\Mb_{W}(B): T=A+\partial B\\& A\in\mathcal{R}_{k}(\Omega,\Z^{m}),\, B\in\mathcal{R}_{k+1}(\Omega,\Z^{m})\}.
\end{align*}

For example the flat norm of a $0$-current given by to point $x_1$ and $x_2$ with multiplicity $+1$ and $-1$ respectively is the length of the segment connecting $x_1$ and $x_2$.

\begin{theorem}[Theorem 31.2, \cite{Sim}]\label{thm31.2} Let $T_{j},T\in \rz$ be such that 
	\begin{displaymath}
	\sup_{j\geq 1}(\Mb_{W}(T_{j})+\Mb_{W}(\partial T_{j}))<\infty
	\end{displaymath}
	for all $W\sbb \Omega$. Then $T_{j}\cstar T$ if and only if $\mathbf{F}_{W}(T_{j}-T)\to 0$ for every $W\sbb \Omega$.
\end{theorem}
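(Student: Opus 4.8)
The plan is to establish the two implications separately: the direction ``flat convergence implies weak$^\ast$ convergence'' is the soft half, following directly from the dual pairing of currents against forms, while the reverse direction is the substantive half, resting on the compactness theorem together with uniqueness of weak$^\ast$ limits. Throughout I would fix $W\sbb\Omega$ and interpose an open set $W'$ with $W\sbb W'\sbb\Omega$, so that the local mass bounds are available exactly where the compactness argument needs them.

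For the implication $\Fl_W(T_j-T)\to 0\;\Rightarrow\;T_j\cstar T$, I would argue straight from the definition of the flat norm. Given a smooth compactly supported $1$-form $\phi$ with $\su\phi\subset W$, any admissible decomposition $T_j-T=A+\partial B$ with $A\in\mathcal{R}_1(\Omega,\Z^m)$ and $B\in\mathcal{R}_2(\Omega,\Z^m)$ gives
$$
\langle T_j-T,\phi\rangle=\langle A,\phi\rangle+\langle B,d\phi\rangle,
$$
whence $\lvert\langle T_j-T,\phi\rangle\rvert\le C(\phi)\bigl(\Mb_W(A)+\Mb_W(B)\bigr)$, with $C(\phi)$ the larger of the comasses of $\phi$ and $d\phi$. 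Taking the infimum over all such decompositions yields $\lvert\langle T_j-T,\phi\rangle\rvert\le C(\phi)\,\Fl_W(T_j-T)\to 0$. Since every compactly supported test form has support in some $W\sbb\Omega$, this produces $T_j\cstar T$; note that this half uses neither the mass bound nor integrality.

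For the converse, $T_j\cstar T$ together with the uniform bounds $\Rightarrow\Fl_W(T_j-T)\to 0$, the engine is the Federer--Fleming compactness theorem in its $\Z^m$-valued form (cf.~\cite{FF}). Because $\sup_j\bigl(\Mb_{W'}(T_j)+\Mb_{W'}(\partial T_j)\bigr)<\infty$, the family $\{T_j\}$ is precompact in the flat topology on $W$: any subsequence admits a further subsequence $T_{j_k}$ with $\Fl_W(T_{j_k}-S)\to 0$ for some $S\in\mathcal{R}_1(W,\Z^m)$. By the first implication this forces $T_{j_k}\cstar S$ on $W$; but the hypothesis gives $T_{j_k}\cstar T$, and weak$^\ast$ limits of currents are unique, so $S=T$ on $W$. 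Hence every subsequence of $\{T_j\}$ has a further subsequence flat-convergent to $T$, and the standard subsequence criterion in the metric space $(\,\cdot\,,\Fl_W)$ upgrades this to $\Fl_W(T_j-T)\to 0$. As $W\sbb\Omega$ was arbitrary, the proof concludes.

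The main obstacle is the converse direction, and within it the genuine content is the compactness theorem delivering flat-precompactness from the mass and boundary-mass bounds; everything else (uniqueness of the weak$^\ast$ limit, the subsequence principle) is routine. Two points deserve care: the bounds are local, so one must pass through the intermediate set $W'$ and apply compactness on the slightly smaller $W$; and since the multiplicities lie in $\Z^m$ rather than $\Z$, one invokes the vector-valued compactness statement recalled earlier in the paper rather than the scalar theorem verbatim.
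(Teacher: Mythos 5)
This statement is quoted from Simon's lectures and the paper offers no proof of its own, so there is nothing internal to compare against; your argument is the standard textbook route and is essentially sound. The soft direction is correct as written: pairing a near-optimal decomposition $T_j-T=A_j+\partial B_j$ against a form supported in $W$ and letting $\Mb_W(A_j)+\Mb_W(B_j)\to\Fl_W(T_j-T)$ gives weak$^*$ convergence, and indeed uses neither the mass bounds nor integrality.

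The one point where you must be careful is the precise compactness input in the converse direction. The compactness theorem recalled in this paper (Theorem \ref{compactness}, and the version in \cite{FF} as cited here) produces a subsequence converging \emph{weakly$^*$} to an integral current; if that is all you feed into your subsequence argument, you recover only weak$^*$ convergence of further subsequences to $T$, which is the hypothesis, and the argument becomes circular. What you actually need is precompactness \emph{in the flat metric}: the set of integral $1$-currents supported in a fixed compact set with $\Mb+\Mb(\partial\,\cdot)\le C$ is totally bounded for $\Fl_W$ (a consequence of the deformation theorem), and sequentially compact once one knows the class is flat-complete (the closure theorem). This flat-norm precompactness is genuinely stronger than the weak$^*$ statement and is where all the content of the hard implication lives; Simon's own proof runs exactly through this total-boundedness lemma. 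So your outline is correct provided you invoke the deformation-theorem form of Federer--Fleming compactness explicitly rather than the weak$^*$ form stated in the appendix; as written, the phrase ``precompact in the flat topology'' asserts the key fact rather than deriving it. Your handling of the localization through an intermediate $W'$ and of the $\Z^m$-valued (rather than scalar) setting is appropriate.
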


\smallskip
{\em Slicing of $1$-currents:} 
 For a current $T\in\rz$ such that $\Mb_{W}(T)+\Mb_{W}(\partial T)<\infty$ for all $W\sbb\Omega$ and a Lipschitz function $f:\R^{n}\rightarrow\R$, one can define the slice of $T$ through $f$ in $t\in\R$ as
\begin{displaymath}
T[f,t-]=\partial(T\mrestr\{x:f(x)<t\})-(\partial T)\mrestr\{x:f(x)<t\}
\end{displaymath}
\begin{displaymath}
T[f,t+]=(\partial T)\mrestr\{x:f(x)>t\}-\partial(T\mrestr\{x:f(x)>t\}).
\end{displaymath}
Up to at most a countable set of point $t\in \R$ for which $\Mb(T\mrestr\{x:f(x)=t\})+\Mb((\partial T)\mrestr\{x:f(x)=t\})>0$, it holds
\begin{displaymath}
T[f,t-]=T[f,t+]=:T[f,t].
\end{displaymath}

These are the main properties of the slicing that we need in the paper (see Section 4.2.1  in \cite{Fed}):
	\begin{itemize}
		\item[$\mathrm{(i)}$] $\su T[f,t+]\subset f^{-1}\{t\}\cap\su T$;
		\item[$\mathrm{(ii)}$] it holds
		\begin{displaymath}
		\int_{a}^{b}\Mb ( T[f,t+])\,dt\leq\mathrm{Lip}(f)\lt\{x:a<f(x)<b\}.
		\end{displaymath}
		for all $-\infty\leq a<b\leq\infty$; 
		\item[$\mathrm{(iii)}$] $T[f,t+]$ is a 0-current for a.e. $t\in\R$;
		\item[$\mathrm{(iv)}$] if $\mathrm{supp}T\subset K$, $K\subset\R^{n}$ a compact set, then
		\begin{displaymath}
		\int\Fl_{K\cap\{x:f(x)=t\}}( T[f,t+])\,dt\leq\mathrm{Lip}(f)\Fl_{K}(T).
		\end{displaymath}
	\end{itemize}

{\em Push forward of $1$-currents:} For a bi-lipschitz map $f:\R^{n}\rightarrow\R^{n}$, the push-forward $f_{\sharp}T$ of $T$ is the current
\begin{equation}\label{pushingf}
\langle f_{\sharp}T,\phi\rangle=\int_{f(\gamma)}\theta(f^{-1}(y))\langle\phi(y);\tau'(y)\rangle d\h(y),
\end{equation}
where $\tau'$ it the tangent to $f(\gamma)$ with the same orientation of $\tau$, $\tau'(f(x))=D_{\tau}f(x)/\lvert D_{\tau}f(x)\lvert$ and $D_{\tau}f(x)$ denotes the tangential derivative of $f$ along $\gamma$, which exists $\h$-a.e. on $\gamma$ since $f$ is Lipschitz on $\gamma$; if $f$ is differentiable in $x$ then $D_{\tau}f(x)=Df(x)\tau(x)$.

\bigskip

 Finally  we state some additional results on which the proof of the liminf inequality also relies. Although both are known results in the theory of scalar currents \cite[ Subsection 4.2.24, Theorems 4.2.16]{Fed}, we refer to \cite[Theorem 2.4, Theorem 2.5]{CGM} for their $\Z^{m}$-valued version and proof. The first one is a compactness result in the class of 1-rectifiable currents without boundary, that, in the liminf, ensures that the limit measure belongs to the space $\M(\Omega)$. 
\begin{theorem}[Compactness]\label{compactness} Let $(T_{j})_{j\in\N}$ be a sequence of rectifiable 1-currents without boundary in $\mathcal{R}_{1}(\R^{n},\Z^{m})$. If
	\begin{displaymath}
	\sup_{j\in\N}\Mb(T_{j})<\infty
	\end{displaymath}
	then there are a current $T\in\mathcal{R}_{1}(\R^{n},\Z^{m})$ and a subsequence $(T_{j_{k}})_{k\in\N}$ such that
	\begin{displaymath}
	T_{j_{k}}\cstar T.
	\end{displaymath}
\end{theorem}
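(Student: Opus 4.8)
The plan is to follow the classical Federer--Fleming compactness scheme, reducing the $\Z^{m}$-valued statement to the scalar integer-multiplicity case. First I would extract a weak$^{*}$ limit by abstract functional-analytic compactness: since each $T_{j}$ is a bounded linear functional on the separable space of smooth compactly supported $1$-forms with $\Mb(T_{j})\le C$, a Banach--Alaoglu argument together with a diagonal procedure produces a subsequence $(T_{j_{k}})$ and a $1$-dimensional current $T$ (a priori only in the distributional sense) with $T_{j_{k}}\cstar T$. Lower semicontinuity of the mass under weak$^{*}$ convergence (mass being a supremum of evaluations against forms of norm $\le 1$) gives $\Mb(T)\le\liminf_{k}\Mb(T_{j_{k}})<\infty$, and continuity of the boundary operator, $\langle\partial T,\phi\rangle=\langle T,d\phi\rangle=\lim_{k}\langle T_{j_{k}},d\phi\rangle=\lim_{k}\langle\partial T_{j_{k}},\phi\rangle=0$, shows $\partial T=0$. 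Thus the only thing left is to prove that $T$ is rectifiable with multiplicity in $\Z^{m}$, i.e.\ that $T\in\mathcal{R}_{1}(\R^{n},\Z^{m})$.

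For this rectifiability-and-integrality step I would pass to the $m$ scalar components. Writing $\theta_{j}=(\theta_{j}^{(1)},\ldots,\theta_{j}^{(m)})$ for the multiplicity of $T_{j}$, each component defines a scalar integer-multiplicity $1$-current $T_{j}^{(\ell)}$ with $\partial T_{j}^{(\ell)}=0$ and $\Mb(T_{j}^{(\ell)})\le\Mb(T_{j})\le C$, since $\lvert\theta_{j}^{(\ell)}\lvert\le\lvert\theta_{j}\lvert$ pointwise. Applying the classical Federer--Fleming closure/compactness theorem for scalar integral currents to each $\ell$ (here the boundary-rectifiability subtlety is vacuous because all boundaries vanish) and diagonalizing over $\ell=1,\ldots,m$, I obtain a single subsequence along which $T_{j_{k}}^{(\ell)}\cstar T^{(\ell)}$, with each $T^{(\ell)}$ a scalar rectifiable $1$-current of integer multiplicity and zero boundary.

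It then remains to reassemble the scalar limits into one $\Z^{m}$-valued rectifiable current. I would take $\gamma=\bigcup_{\ell=1}^{m}\gamma^{(\ell)}$ as carrier, where $\gamma^{(\ell)}$ is the rectifiable set underlying $T^{(\ell)}$; a countable union of $1$-rectifiable sets is again $1$-rectifiable. Because $\gamma$ is rectifiable, its approximate tangent line is defined $\h$-a.e., so I can fix a single orientation $\tau$ on $\gamma$; each $T^{(\ell)}$ then has an integer scalar multiplicity $\theta^{(\ell)}$ with respect to $\tau$ (extended by zero off $\gamma^{(\ell)}$), and setting $\theta=(\theta^{(1)},\ldots,\theta^{(m)})$ yields a $\Z^{m}$-valued multiplicity with $T=\theta\otimes\tau\,\h\mrestr\gamma\in\mathcal{R}_{1}(\R^{n},\Z^{m})$. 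Uniqueness of the weak$^{*}$ limit guarantees that this reassembled $T$ coincides with the distributional limit found in the first step.

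The main obstacle is the scalar closure theorem invoked in the second paragraph: proving that the weak$^{*}$ limit of integer-multiplicity currents is again rectifiable with integer multiplicity is precisely the hard content of Federer--Fleming theory, normally established via the deformation theorem (or, in more recent treatments, via a rectifiability criterion for the limiting measure combined with an integrality argument on slices). I would therefore either invoke it as a black box---as is done in \cite[Theorem 2.4]{CGM} for the $\Z^{m}$-valued formulation---or, if a self-contained argument is wanted, slice $T$ by a generic family of hyperplanes to reduce rectifiability and integrality to the $0$-dimensional case, where integrality of the slice multiplicities is inherited from the $T_{j}$ and patched back together using the structure theorem for the components.
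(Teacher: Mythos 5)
The paper does not prove this theorem: it is stated in the Appendix as a known result, with the $\Z^{m}$-valued version attributed to \cite[Theorem 2.4]{CGM} and the scalar case to Federer--Fleming \cite{FF}, \cite{Fed}. Your outline is correct and is essentially the standard route taken in those references --- componentwise reduction to the scalar integral-current closure theorem (whose hard content you rightly isolate as the black box), followed by reassembly of the scalar limits on the carrier $\gamma=\bigcup_{\ell}\gamma^{(\ell)}$ with a single orientation and sign-adjusted integer multiplicities --- so there is nothing substantive to object to.
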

The second theorem gives a characterization of the support of a closed 1-rectifiable current.\begin{theorem}[Structure]\label{struttura} Let $T\in\mathcal{R}_{1}(\R^{n};\Z^{m})$ with $\partial T=0$ and $\Mb(T)<\infty$. Then there are countably many oriented Lipschitz curves $\gamma_{i}$ with tangent vector fields $\tau_{i}:\gamma_{i}\rightarrow\Sa^{n-1}$ and multiplicities $\theta_{i}\in\Z^{m}$ such that
	\begin{displaymath}
	\langle T,\phi\rangle=\sum_{i\in\N}\theta_{i}\int_{\gamma_{i}}\langle\phi(x);\tau_{i}(x)\rangle d\h(x)\quad\forall\phi\in C_{c}^{\infty}(\R^{n},\R^{n}).
	\end{displaymath}
	Further,
	\begin{displaymath}
	\sum_{i}\lvert\theta_{i}\lvert\h(\gamma_{i})\leq\sqrt{m}\Mb(T).
	\end{displaymath}
\end{theorem}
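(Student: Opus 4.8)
The plan is to reduce this vectorial statement to the classical scalar decomposition of integral $1$-cycles into closed curves by splitting the $\Z^{m}$-valued multiplicity into its components, then to recombine the resulting scalar pieces into a single countable family. The constant $\sqrt{m}$ will appear at the very end, precisely as the comparison between the $\ell^{1}$ and $\ell^{2}$ norms on $\R^{m}$.

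First I would write $\theta=(\theta^{1},\dots,\theta^{m})$ with each $\theta^{k}:\gamma\to\Z$, and introduce for $k=1,\dots,m$ the scalar $1$-rectifiable current $T^{k}$ given by $\langle T^{k},\phi\rangle=\int_{\gamma}\theta^{k}\langle\phi;\tau\rangle\,d\h$ for $\phi\in C_{c}^{\infty}(\R^{n},\R^{n})$, so that $T=\sum_{k}T^{k}e_{k}$ componentwise, where $e_{1},\dots,e_{m}$ is the standard basis of $\R^{m}$. Since $\lvert\theta^{k}\lvert\leq\lvert\theta\lvert$ one has $\Mb(T^{k})=\int_{\gamma}\lvert\theta^{k}\lvert\,d\h\leq\Mb(T)<\infty$, and taking the $k$-th component of the identity $\langle\partial T,\phi\rangle=\langle T,d\phi\rangle=0$ (valid for every scalar $\phi\in C_{c}^{\infty}(\R^{n})$) shows $\partial T^{k}=0$. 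Hence each $T^{k}$ is a scalar integral $1$-current without boundary and of finite mass.

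Next I would apply the classical decomposition theorem for integral $1$-cycles \cite{Fed} to each $T^{k}$: there are countably many oriented closed Lipschitz curves $\gamma_{j}^{k}$, with tangents $\tau_{j}^{k}$ and integer multiplicities $a_{j}^{k}\in\Z$, such that $\langle T^{k},\phi\rangle=\sum_{j}a_{j}^{k}\int_{\gamma_{j}^{k}}\langle\phi;\tau_{j}^{k}\rangle\,d\h$, together with the mass additivity $\Mb(T^{k})=\sum_{j}\lvert a_{j}^{k}\lvert\h(\gamma_{j}^{k})$ (the pieces are genuine cycles because $\Mb(\partial T^{k})=0$). Relabelling the doubly indexed family $\{\gamma_{j}^{k}\}_{k,j}$ as a single sequence $\{\gamma_{i}\}$ and assigning to $\gamma_{j}^{k}$ the vector multiplicity $\theta_{i}:=a_{j}^{k}e_{k}\in\Z^{m}$, which is constant on each curve, the componentwise identity $T=\sum_{k}T^{k}e_{k}$ becomes exactly the desired representation $\langle T,\phi\rangle=\sum_{i}\theta_{i}\int_{\gamma_{i}}\langle\phi;\tau_{i}\rangle\,d\h$; the series converges absolutely because $\sum_{k,j}\lvert a_{j}^{k}\lvert\h(\gamma_{j}^{k})=\sum_{k}\Mb(T^{k})<\infty$. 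For the mass bound, using $\lvert\theta_{i}\lvert=\lvert a_{j}^{k}e_{k}\lvert=\lvert a_{j}^{k}\lvert$ and summing the scalar identities gives
\begin{displaymath}
\sum_{i}\lvert\theta_{i}\lvert\h(\gamma_{i})=\sum_{k=1}^{m}\sum_{j}\lvert a_{j}^{k}\lvert\h(\gamma_{j}^{k})=\sum_{k=1}^{m}\Mb(T^{k})=\int_{\gamma}\sum_{k=1}^{m}\lvert\theta^{k}\lvert\,d\h,
\end{displaymath}
and since $\sum_{k}\lvert\theta^{k}\lvert\leq\sqrt{m}\,(\sum_{k}\lvert\theta^{k}\lvert^{2})^{1/2}=\sqrt{m}\,\lvert\theta\lvert$ by Cauchy--Schwarz, the right-hand side is at most $\sqrt{m}\int_{\gamma}\lvert\theta\lvert\,d\h=\sqrt{m}\,\Mb(T)$, as claimed.

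The only genuinely deep input is the scalar decomposition of integral $1$-cycles into closed Lipschitz curves, which I would simply cite (a $\Z^{m}$-valued treatment of the gluing is also available in \cite{CGM}); everything else is bookkeeping. The one point that deserves care, and is really why the vectorial statement is not entirely free, is that one should keep the $m$ components separate rather than try to produce a single family of curves carrying all multiplicities simultaneously: treating them independently avoids any cancellation and keeps the mass identity exact, at the unavoidable price of the dimensional factor $\sqrt{m}$ coming from the $\ell^{1}$--$\ell^{2}$ comparison.
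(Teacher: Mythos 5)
Your proof is correct. Note that the paper does not actually prove this statement: it is quoted in the Appendix as a known result, with the $\Z^{m}$-valued version attributed to \cite[Theorem 2.5]{CGM} (building on the scalar decomposition of integral $1$-cycles in \cite{Fed}). Your componentwise reduction --- splitting $T=\sum_{k}T^{k}e_{k}$, applying the scalar decomposition with mass additivity to each cycle $T^{k}$, and recombining, with the constant $\sqrt{m}$ arising exactly from the $\ell^{1}$--$\ell^{2}$ comparison $\sum_{k}\lvert\theta^{k}\lvert\leq\sqrt{m}\lvert\theta\lvert$ --- is the standard argument behind that cited result, so there is nothing to flag beyond the fact that the deep input (the scalar decomposition into closed Lipschitz curves with $\Mb(T^{k})=\sum_{j}\lvert a_{j}^{k}\lvert\h(\gamma_{j}^{k})$) is itself taken on citation, as is appropriate here.
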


Finally we recall the approximation result for currents with polyhedral currents (this is a classical result for scalar currents. For currents with vector value multiplicity the proof can be found in \cite{CGM}, while the corresponding result for $k$-currents con be found in \cite{BCG}).

\begin{theorem}[Density]\label{densitapolichiuse}
	Fix $\epsilon>0$ and consider a $\Z^{m}$-valued closed 1-current $T\in\mathcal{R}_{1}(\Omega,\Z^{m})$. Then there exist a bijective map $f\in C^{1}(\R^{n};\R^{n})$, with inverse also $C^{1}$, and a closed polyhedral 1-current $P\in\mathcal{P}_{1}(\R^{n},\Z^{m})$ such that
	\begin{displaymath}
	\Mb(f_{\sharp}T-P)\leq\epsilon
	\end{displaymath}
	and
	\begin{displaymath}
	\lvert Df(x)-\mathrm{Id}\lvert +\lvert f(x)-x\lvert\leq\epsilon\quad\forall x\in\R^{n}.
	\end{displaymath}
	Moreover, $f(x)=x$ whenever $\mathrm{dist}(x,\su T)\geq\epsilon$.
\end{theorem}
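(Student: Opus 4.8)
The plan is to reduce, via the Structure Theorem \ref{struttura}, to the straightening of a single Lipschitz loop, and then to build a global $C^{1}$ diffeomorphism that maps the bulk of that loop \emph{exactly} onto straight segments, so that the mass defect between $f_{\sharp}T$ and a polyhedral current is carried only by a set of arbitrarily small length.

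First I would use Theorem \ref{struttura} to write $T=\sum_{i}T_{i}$, where each $T_{i}$ is a closed Lipschitz loop of constant multiplicity $\theta_{i}\in\Z^{m}$, and the bound $\sum_{i}|\theta_{i}|\h(\gamma_{i})\leq\sqrt{m}\,\Mb(T)<\infty$ lets me discard all but finitely many loops at the cost of mass at most $\epsilon/3$; closedness is preserved since each $T_{i}$ is individually boundaryless. Since the support of the remaining finite family is $1$-rectifiable, I can cover it, up to an $\h$-null set and up to a portion contributing at most $\epsilon/3$ to the mass, by finitely many pairwise disjoint compact $C^{1}$ arcs, chosen so as to avoid the at most countable set of (self-)intersection points. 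I then subdivide each $C^{1}$ arc into short subarcs on each of which the tangent is almost constant, so that each subarc is $C^{1}$-close to the chord joining its endpoints.

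The core step is the construction of $f$. On the interior of each short subarc I build a $C^{1}$ diffeomorphism, supported in a thin tubular neighbourhood disjoint from those of the other subarcs, that maps the subarc onto a straight segment; being a flattening of a nearly straight $C^{1}$ arc, this map is a small $C^{1}$ perturbation of the identity, and these local maps, having disjoint supports, patch together into a single global bi-Lipschitz $f\in C^{1}(\R^{n};\R^{n})$ with $|Df-\mathrm{Id}|+|f-x|\leq\epsilon$ and $f=\mathrm{id}$ outside the $\epsilon$-neighbourhood of $\su T$. Crucially, at the junctions between consecutive straightened pieces the curve $f\circ\gamma_{i}$ cannot acquire a corner, since a $C^{1}$ map cannot create an angle at a point where $\gamma_{i}$ is differentiable; hence $f\circ\gamma_{i}$ consists of straight segments joined by short smooth elbows supported in the transition regions, whose total length I keep as small as I please.

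Finally I define $P$ by keeping the straight segments of $f_{\sharp}T$ and replacing each short elbow, and each uncovered portion, by the straight chord through its endpoints, so that $P\in\mathcal{P}_{1}(\R^{n},\Z^{m})$ is a \emph{closed} polyhedral current, the loop closing up because $f\circ\gamma_{i}$ does. By construction $f_{\sharp}T$ and $P$ coincide, with equal multiplicity, along all the straightened segments, so those cancel exactly in $f_{\sharp}T-P$ and only the short transition and uncovered regions survive; this is what forces the \emph{mass} distance, and not merely the flat distance, to satisfy $\Mb(f_{\sharp}T-P)\leq\epsilon$, while $\partial P=0$ follows from $\partial f_{\sharp}T=f_{\sharp}\partial T=0$ together with the closed construction of $P$. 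I expect the main obstacle to be precisely the global assembly of $f$: arranging the thin tubes around the many subarcs to be mutually disjoint, which is delicate near the intersection points of the loops, guaranteeing that the local straightenings glue into an honest $C^{1}$ diffeomorphism close to the identity, and controlling the elbow regions so that the exact geometric coincidence of $f_{\sharp}T$ and $P$ on the bulk is what makes the mass small.
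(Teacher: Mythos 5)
The paper does not actually prove Theorem \ref{densitapolichiuse}: it is recalled as a known result, with the $\Z^{m}$-valued version taken from \cite{CGM} (and \cite{BCG} for $k$-currents), where it is obtained by adapting Federer's strong polyhedral approximation \cite[4.2.20 ff.]{Fed}, whose engine is the deformation theorem that pushes $T$ onto the $1$-skeleton of a fine grid. Your proposal is therefore an attempt at a self-contained alternative, and as written it has a genuine gap at its central step.

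The gap is the passage from the Lipschitz loops of Theorem \ref{struttura} to ``finitely many $C^{1}$ subarcs traversed consecutively, joined by short elbows, plus a small remainder.'' The loops $\gamma_{i}$ are only Lipschitz; covering a $1$-rectifiable set up to small $\h$-measure by $C^{1}$ arcs produces arcs that are in general \emph{not} subarcs of $\gamma_{i}$ and that meet the loop in a closed set with empty interior relative to the arc, so the preimage under the parametrization of the covered part is merely a measurable subset of $S^{1}$, not a finite union of intervals. Consequently the loop may enter and leave each of your tubes infinitely often: $f_{\sharp}T$ restricted to a tube need not be a single weighted segment, the ``uncovered portion'' need not consist of finitely many subarcs, and the chord-closing step need not produce a \emph{finite} (hence polyhedral) closed current $P$, nor is $\Mb(f_{\sharp}T-P)$ controlled on the tubes. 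The same fragmentation, together with self-intersections and accumulation of the countably many loops, defeats the requirement that the tubular supports of the local straightenings be pairwise disjoint --- an obstacle you name but do not resolve, and which is precisely why the known proofs do not straighten locally but instead deform globally onto a grid. The elementary substitute for a single Lipschitz loop is the inscribed polygon obtained by subdividing the parameter interval, which converges to $T_{i}$ in mass and in flat norm; but upgrading flat closeness to the mass closeness $\Mb(f_{\sharp}T-P)\leq\epsilon$ via a bi-Lipschitz $f$ close to the identity is exactly the nontrivial content of the deformation-theorem argument, and your sketch does not supply a replacement for it.
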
 

 It is important to notice that the deformation result given above guarantees  a current $T$ without boundary can be approximated by polyhedral currents without boundary, or, equivalently, divergence-free measures. In particular one should note that the multiplicity map is invariant under deformation through a bi-Lipschitz function.

The theorem is given on $\R^{n}$ but a local version can be deduced using the extension lemma recalled below \cite[ Lemma 2.3]{CGM}.

\begin{lemma}[Extension]\label{estensione}
	Let $\Omega\subset\R^{n}$ be a  bounded Lipschitz open set. For every closed rectifiable 1-current defined in $\Omega$, $T\in\rz$, there exists a closed rectifiable 1-current $\mathcal{E}T\in\mathcal{R}_{1}(\R^{n};\Z^{m})$ with $\mathcal{E}T\mrestr\Omega=T$ and $\Mb(\mathcal{E}T)\leq c\Mb(T)$. The constant $c$ depends only on $\Omega$. Moreover, $\lim_{\delta\to 0}\Mb(\mathcal{E}T\mrestr(\Omega_{\delta}\setminus\Omega))=0$, where $\Omega_{\delta}=\{x:\mathrm{dist}(x,\Omega)<\delta\}$.
\end{lemma}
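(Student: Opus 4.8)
The key point is that, although $T$ is closed \emph{inside} $\Omega$, its curves are allowed to terminate on $\partial\Omega$ (a Lipschitz curve with both endpoints on $\partial\Omega$ is a closed current in $\Omega$). Viewing $T$ as a current in $\R^{n}$ with support in $\bar\Omega$, its only possible boundary is a $\Z^{m}$-valued $0$-current $\beta:=\partial T$ carried by $\partial\Omega$. The plan is therefore to reduce everything to producing an \emph{exterior filling} of $\beta$: a current $S\in\mathcal{R}_{1}(\R^{n},\Z^{m})$ with $\su S\subset\R^{n}\setminus\Omega$, $\partial S=-\beta$, $\Mb(S)\le c(\Omega)\Mb(T)$, and $\Mb(S\mrestr(\Omega_{\delta}\setminus\Omega))\to0$ as $\delta\to0$. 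Setting $\mathcal{E}T:=T+S$ then gives $\partial\mathcal{E}T=\beta-\beta=0$; since $S$ lives outside $\Omega$ one has $\mathcal{E}T\mrestr\Omega=T$, and the two quantitative statements are inherited from those of $S$ and $T$.

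To build $S$ I would reflect $T$ across the boundary in a thin collar. Writing $d$ for the signed distance to $\partial\Omega$ (positive inside), the Lipschitz regularity gives a fixed $s_{0}^{\ast}=s_{0}^{\ast}(\Omega)>0$ and a bi-Lipschitz involution $\sigma$ of the collar $\{|d|<s_{0}^{\ast}\}$ with $\sigma|_{\partial\Omega}=\mathrm{id}$ and $\sigma(\{0<d<s_{0}^{\ast}\})=\{-s_{0}^{\ast}<d<0\}$, obtained by reflecting each boundary chart across its Lipschitz graph and gluing with a partition of unity. By the slicing inequality (property $\mathrm{(ii)}$ of the Appendix) applied to $d$, almost every level $s_{0}\in(s_{0}^{\ast}/2,s_{0}^{\ast})$ is such that $T[d,s_{0}]$ is a $0$-rectifiable current with $\Mb(T[d,s_{0}])\le (2/s_{0}^{\ast})\Mb(T)$; fixing one such $s_{0}$ I set
\begin{displaymath}
S:=-\,\sigma_{\sharp}\bigl(T\mrestr\{0<d<s_{0}\}\bigr)+P,
\end{displaymath}
where $P$ is a capping current described below.

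The verification that $\partial S=-\beta$ is the heart of the matter. Using $\partial\circ\sigma_{\sharp}=\sigma_{\sharp}\circ\partial$ together with the fact that the only loose ends of $T\mrestr\{0<d<s_{0}\}$ are $\beta$ on $\partial\Omega$ and the slice $T[d,s_{0}]$ on $\{d=s_{0}\}$ (so $\partial(T\mrestr\{0<d<s_{0}\})=\beta-T[d,s_{0}]$ up to the standard orientation), and that $\sigma$ fixes $\partial\Omega$ so $\sigma_{\sharp}\beta=\beta$, one gets $\partial\bigl(-\sigma_{\sharp}(T\mrestr\{0<d<s_{0}\})\bigr)=-\beta+\sigma_{\sharp}T[d,s_{0}]$. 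The residual $0$-current $\sigma_{\sharp}T[d,s_{0}]$ sits on $\{d=-s_{0}\}$ and, crucially, carries zero total $\Z^{m}$-charge: indeed $T[d,s_{0}]=-\partial(T\mrestr\{d>s_{0}\})$ is the boundary of a compactly supported current, whence $\langle T[d,s_{0}],1\rangle=0$. Hence it is annihilated by the cone $P$ over $-\sigma_{\sharp}T[d,s_{0}]$ from a point $p$ chosen in the exterior at distance $\sim\mathrm{diam}(\Omega)$ from $\bar\Omega$ (coning inside $\R^{n}\setminus\Omega$ if $\Omega$ is not convex): then $\partial P=-\sigma_{\sharp}T[d,s_{0}]$ and $\Mb(P)\le C\,\mathrm{diam}(\Omega)\,\Mb(T[d,s_{0}])$, so that $\partial S=-\beta$ as required.

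The two quantitative claims then follow cheaply. By the bi-Lipschitz bound for $\sigma$ and the cone estimate, $\Mb(S)\le \mathrm{Lip}(\sigma)\Mb(T)+C\,\mathrm{diam}(\Omega)\,(2/s_{0}^{\ast})\Mb(T)=c(\Omega)\Mb(T)$, so $\Mb(\mathcal{E}T)\le c(\Omega)\Mb(T)$ with $c$ depending on $\Omega$ only through $s_{0}^{\ast}$, $\mathrm{Lip}(\sigma)$ and $\mathrm{diam}(\Omega)$. Since $P$ is supported in $\{d\le-s_{0}\}$, for $\delta<s_{0}$ one has $\mathcal{E}T\mrestr(\Omega_{\delta}\setminus\Omega)=-\sigma_{\sharp}(T\mrestr\{0<d<\delta'\})$ with $\delta'\le\mathrm{Lip}(\sigma)\,\delta$, whose mass is $\le\mathrm{Lip}(\sigma)\,\Mb(T\mrestr\{0<d<\delta'\})\to0$ by continuity of the measure $\|T\|$. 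I expect the main obstacle to be precisely the closedness step: one must simultaneously construct the bi-Lipschitz reflection $\sigma$ for a general Lipschitz domain and dispose of the interior slice $T[d,s_{0}]$ without creating new boundary, which is exactly what forces the good-slice choice together with the zero-total-charge observation used to define the cap $P$.
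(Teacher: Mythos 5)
The paper does not actually prove this lemma: it is quoted verbatim from \cite[Lemma 2.3]{CGM}, so there is no in-paper argument to compare yours against. Your strategy --- a bi-Lipschitz reflection of the inner collar, a good slice of $T$ along the distance function to control the new interior boundary, and a cap --- is the natural one and is essentially the route taken in the cited reference; the good-slice selection via property (ii) of the slicing, the cancellation of $\beta$ because $\sigma$ fixes $\partial\Omega$, the zero-total-charge of $T[d,s_{0}]$, and the two mass estimates are all sound.

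The genuine gap is the cap $P$, and as you set things up it is not removable. Zero \emph{total} charge of $\sigma_{\sharp}T[d,s_{0}]$ only lets you fill it inside a \emph{connected} set containing its support; what you need is that the charge vanishes separately in each connected component of $\R^{n}\setminus\Omega$ that receives reflected slice points, and this can fail. Concretely, let $\Omega=\{1<|x|<2\}\subset\R^{2}$ and let $T$ be the radial segment from $(1,0)$ to $(2,0)$ with multiplicity $1$: it is closed in $\Omega$, but for any finite-mass $S$ with $\su(S)\subset\R^{2}\setminus\Omega$ the piece $S\mrestr\bar{B}_{1}$ is compactly supported with boundary equal to $(\partial S)\mrestr \bar B_1$ (the two components of $\R^{2}\setminus\Omega$ are at positive distance), so that boundary must have zero total charge, whereas $-\partial T$ deposits charge $1$ on the inner circle alone; hence no closed extension exists at all, and the statement has to be read with the implicit hypothesis that $\R^{n}\setminus\Omega$ is connected. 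Even granting that, a straight cone from a far-away point $p$ will in general cross $\Omega$; the correct cap joins the finitely many reflected slice points by rectifiable paths of length $\le C(\Omega)$ lying \emph{inside} $\R^{n}\setminus\Omega$ (possible because that set is then connected and, by Lipschitz regularity of $\partial\Omega$, uniformly rectifiably connected), pairing points off using the zero-total-charge condition. Two smaller repairs: gluing chartwise reflections with a partition of unity does not yield an injective map, let alone an involution --- build $\sigma$ from a Lipschitz collar of $\partial\Omega$ obtained by flowing along a smooth vector field uniformly transversal to the boundary; and the identities $\partial\sigma_{\sharp}=\sigma_{\sharp}\partial$ and $\sigma_{\sharp}\beta=\beta$ deserve justification, since $\beta=\partial T$ may have infinite mass (infinitely many arcs ending on $\partial\Omega$) and so these must be invoked at the level of flat chains rather than normal currents.
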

Finally we include the following lemma \cite[Lemma 3.3]{CGM}, again useful in proving the limsup inequality.
\begin{lemma}\label{continuitasottodef} Assume that $\psi:\Z^{m}\times\Sa^{n-1}\rightarrow[0,\infty)$ is Borel measurable, obeys $\psi(0,t)=0$, $\psi(b,t)\geq c\lvert b\lvert$ and 
	\begin{displaymath}
	\lvert \bp(b,t)-\bp(b',t')\lvert\leq c\lvert b-b'\lvert + c\lvert b\lvert\lvert t-t'\lvert.
	\end{displaymath}
	Let $\mu,\mu'\in\M(\Omega)$. Then for any open set $\omega\subset\Omega$ we have
	\begin{displaymath}
	\lvert E(\mu,\omega)-E(\mu',\omega)\lvert\leq c\lvert\mu-\mu'\lvert(\omega).
	\end{displaymath}
	Further, if $f:\R^{n}\rightarrow\R^{n}$ is bi-Lipschitz then for any open set $\omega\subset\R^{n}$
	\begin{equation}\label{deformazionefsharp}
	\lvert E(\mu,\omega)-E(f_{\sharp}\mu,f(\omega))\lvert\leq cE(\mu,\omega)\Vert Df-\mathrm{Id}\Vert_{L^{\infty}}. 
	\end{equation}
\end{lemma}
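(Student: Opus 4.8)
The plan is to read $E$ as the line energy $E(\mu,\omega)=\int_{\omega\cap\gamma}\psi(\theta,\tau)\,d\h$ attached to the density $\psi$ of the statement, where $\mu=\theta\otimes\tau\,\h\mrestr\gamma\in\M(\Omega)$, and to prove the two assertions separately. Both rest on the same two features of $\psi$: the displayed Lipschitz-type bound (which, taking $b'=0$ and $t=t'$ and using $\psi(0,\cdot)=0$, already yields the upper growth bound $\psi(b,t)\le c|b|$), and the lower bound $\psi(b,t)\ge c|b|$, which at the end lets me trade the total variation $|\mu|(\omega)$ for the energy $E(\mu,\omega)$. I also use throughout that $\psi(b,t)=\psi(-b,-t)$: since $\theta\otimes\tau=(-\theta)\otimes(-\tau)$, this evenness is exactly what makes $E$ well defined on $\M(\Omega)$, where the pair $(\theta,\tau)$ is determined by $\mu$ only up to a simultaneous sign flip.

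For the first inequality I would introduce a common rectifiable carrier $\Sigma=\gamma\cup\gamma'$ for $\mu$ and $\mu'$, fix a single orientation $\tau_\Sigma$ on $\Sigma$ (agreeing with $\tau$ on $\gamma$, say), and write $\mu=\Theta\otimes\tau_\Sigma\,\h\mrestr\Sigma$ and $\mu'=\Theta'\otimes\tau_\Sigma\,\h\mrestr\Sigma$ with $\Z^m$-valued multiplicities $\Theta,\Theta'$ that vanish off $\gamma,\gamma'$ respectively. Since $\psi(0,\cdot)=0$ and $\psi$ is orientation-even, $E(\mu,\omega)=\int_{\omega\cap\Sigma}\psi(\Theta,\tau_\Sigma)\,d\h$ and likewise for $\mu'$. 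Applying the continuity hypothesis with the common direction $t=t'=\tau_\Sigma$ kills the $|t-t'|$ term and leaves
\[
|\psi(\Theta,\tau_\Sigma)-\psi(\Theta',\tau_\Sigma)|\le c\,|\Theta-\Theta'|,
\]
so integrating over $\omega\cap\Sigma$ and using $|\mu-\mu'|(\omega)=\int_{\omega\cap\Sigma}|\Theta-\Theta'|\,d\h$ (because $|v\otimes\tau_\Sigma|=|v|$ for unit $\tau_\Sigma$) gives the claim.

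For the deformation estimate I would invoke the push-forward formula \eqref{pushingf}, namely $f_\sharp\mu=\theta(f^{-1}(\cdot))\otimes\tau'\,\h\mrestr f(\gamma)$ with $\tau'(f(x))=Df(x)\tau(x)/|Df(x)\tau(x)|$, and change variables $y=f(x)$ along $\gamma$, whose tangential Jacobian is $J(x)=|Df(x)\tau(x)|$ by the area formula. This turns $E(f_\sharp\mu,f(\omega))$ into $\int_{\omega\cap\gamma}\psi(\theta,\tau'\!\circ f)\,J\,d\h$, so the integrand to control is $\psi(\theta,\tau'\!\circ f)\,J-\psi(\theta,\tau)$, which I split as $\psi(\theta,\tau'\!\circ f)(J-1)+[\psi(\theta,\tau'\!\circ f)-\psi(\theta,\tau)]$. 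The two elementary linear estimates
\[
|J-1|=\big||Df\tau|-|\tau|\big|\le|(Df-\mathrm{Id})\tau|\le\|Df-\mathrm{Id}\|_{L^{\infty}},\qquad |\tau'\!\circ f-\tau|\le 2\|Df-\mathrm{Id}\|_{L^{\infty}}
\]
(the second with one factor from normalizing $Df\tau$ and one from $(Df-\mathrm{Id})\tau$), together with $\psi(\theta,\cdot)\le c|\theta|$ and the continuity hypothesis, bound the integrand pointwise by $c\,|\theta|\,\|Df-\mathrm{Id}\|_{L^{\infty}}$. Integrating and then using $c|\theta|\le\psi(\theta,\tau)$ to replace $|\mu|(\omega)$ by $E(\mu,\omega)$ yields \eqref{deformazionefsharp}.

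I expect the only genuinely delicate point to be the bookkeeping in the first inequality: making the common-carrier representation precise, with a consistent orientation on the overlap $\gamma\cap\gamma'$ (where $\tau'=\pm\tau$), the convention $\psi(0,\cdot)=0$ used to extend each integrand to all of $\Sigma$, and the orientation-evenness needed so that $\psi(\Theta',\tau_\Sigma)=\psi(\theta',\tau')$. Once both measures live on the same oriented set with the same tangent, the continuity of $\psi$ collapses everything to a pure multiplicity comparison. In the deformation part the only care required is the two linear estimates above; the remainder is a routine change of variables.
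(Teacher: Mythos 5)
Your argument is correct, but note that the paper itself gives no proof of this lemma: it is imported verbatim as \cite[Lemma 3.3]{CGM}, so there is no in-paper proof to compare against. Your self-contained argument is the natural one and, as far as I can tell, matches the standard proof in that reference: the first estimate via a common oriented carrier $\Sigma=\gamma\cup\gamma'$ (where the only real care is the a.e.\ coincidence of tangent lines on $\gamma\cap\gamma'$ and the orientation-evenness $\psi(b,t)=\psi(-b,-t)$, which you correctly identify as implicit in the well-posedness of $E$ on $\M(\Omega)$); the second via the area formula along $\gamma$, the splitting $\psi(\theta,\tau'\!\circ f)(J-1)+[\psi(\theta,\tau'\!\circ f)-\psi(\theta,\tau)]$, the two linear estimates $|J-1|\le\Vert Df-\mathrm{Id}\Vert_{L^{\infty}}$ and $|\tau'\!\circ f-\tau|\le 2\Vert Df-\mathrm{Id}\Vert_{L^{\infty}}$, and the final trade of $|\mu|(\omega)$ for $E(\mu,\omega)$ via the coercivity $\psi(b,t)\ge c|b|$. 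Both halves check out. Two cosmetic remarks: the statement writes the continuity hypothesis for $\bar\psi$ rather than $\psi$ (a notational leftover from \cite{CGM}); your reading of the two as the same function is the only one consistent with how the lemma is used here, namely with $\psi=\psi_{\mathrm{hom}}$ and the Lipschitz bound of Proposition \ref{continuitapsihom}. Also, the constant you obtain in \eqref{deformazionefsharp} is $c'/c$ with $c'$ the Lipschitz constant and $c$ the coercivity constant, which is fine since the statement does not insist the constants coincide.
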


\end{document}